\newcommand{\marg}[1]{}
\newcommand{\note}[1]{} 				
\newcommand{\defi}[1]{\textsf{#1}} 				
\newcommand{\rra}{\rightarrow}
\newcommand{\C}{{\mathbb C}}
\newcommand{\F}{{\mathbb F}}
\newcommand{\Q}{{\mathbb Q}}
\newcommand{\Z}{{\mathbb Z}}
\newcommand{\Qbar}{{\overline{\Q}}}
\newcommand{\pp}{{\mathfrak p}}
\newcommand{\mZ}{\mathbb{Z}}
\newcommand{\mQ}{\mathbb{Q}}
\newcommand{\mF}{\mathbb{F}}
\newcommand{\mC}{\mathbb{C}}
\newcommand{\cdef}[1]{{\color{black}\textsf{#1}}}	
\newcommand{\zZ}[1]{\mathbb{Z}/ #1 \mathbb{Z}}
\def\Q{\mathbb{Q}}
\def\C{\mathbb{C}}
\def\P{\mathbb{P}}
\def\Z{\mathbb{Z}}
\DeclareMathOperator{\im}{im}
\DeclareMathOperator{\Aut}{Aut}
\DeclareMathOperator{\Gal}{Gal}
\DeclareMathOperator{\Cl}{Cl} 
\DeclareMathOperator{\Div}{Div} 
\DeclareMathOperator{\Spec}{Spec}  
 \DeclareMathOperator{\rank}{rank}
\DeclareMathOperator{\Cot}{Cot}
\newcommand{\tors}{{\operatorname{tors}}}
\newcommand{\red}{{\operatorname{red}}}
\newcommand{\SL}{\operatorname{SL}}
 \DeclareMathOperator{\Sat}{Sat}
\DeclareMathOperator{\GCD}{GCD}
\newcommand{\diamondop}[1]{\langle #1 \rangle}
  \DeclareMathOperator{\cc}{c}
 \DeclareMathOperator{\Prin}{prin}
\newcommand{\Cusp}{\Div^{\cc}}
\newcommand{\Cuspo}{\Div^{0,\cc}}
 \newcommand{\ClCusp}{\Cl^{\cc}}
 \newcommand{\PrinCusp}{\Prin^{\cc}}
 \numberwithin{equation}{subsection}
\newtheorem{theorem}[subsection]{Theorem}
\newtheorem{lemma}[subsection]{Lemma}
\newtheorem{corollary}[subsection]{Corollary}
\newtheorem{proposition}[subsection]{Proposition}
\numberwithin{equation}{subsection}
\newtheorem{thmx}{Theorem}
\theoremstyle{definition}
\newtheorem{definition}[subsection]{Definition}
\newtheorem{conjecture}[subsection]{Conjecture}
\newtheorem{example}[subsection]{Example}
\theoremstyle{remark}
\newtheorem{remark}[subsection]{Remark}
\begin{document}

\title[Sporadic cubic torsion]
{Sporadic cubic torsion}

\author{Maarten Derickx}
\address{Department of Mathematics, Massachusetts Institute of Technology, Cambridge, MA
02139, USA}
\email{maarten@mderickx.nl}

\author{Anastassia Etropolski}
\address{Department of Mathematics, Rice University, 
Houston, TX 77005, USA}
\email{aetropolski@rice.edu}

\author{Mark van Hoeij}
\address{Department of Mathematics, Florida State University,
Tallahassee, FL 32306 USA}
\email{hoeij@math.fsu.edu}

\author{Jackson S. Morrow}
\address{Department of Mathematics , Emory University,
Atlanta, GA 30322 USA}
\email{jmorrow4692@gmail.com}

\author{David Zureick-Brown}
\address{Department of Mathematics, Emory University,
Atlanta, GA 30322 USA}
\email{dzb@mathcs.emory.edu}

\subjclass
{11G18, 
(11G05,  
14H45, 
 11Y50
)}  

\keywords{modular curves; elliptic curves; finitely many cubic points}
\date{\today}
\thanks{}

\begin{abstract}
Let $K$ be a number field, and let $E/K$ be an elliptic curve over $K$. 
The Mordell--Weil theorem asserts that the $K$-rational points $E(K)$ of $E$ form a finitely generated abelian group. 
In this work, we complete the classification of the finite groups which appear as the torsion subgroup of $E(K)$ for $K$ a cubic number field.

To do so, we determine the cubic points on the modular curves $X_1(N)$ for 
\[N = 21, 22, 24, 25, 26, 28, 30, 32, 33, 35, 36, 39, 45, 65, 121.\]
As part of our analysis, we determine the complete list of $N$ for which $J_0(N)$ (resp., $J_1(N)$, resp., $J_1(2,2N)$) has rank 0. 
We also provide evidence to a generalized version of a conjecture of Conrad, Edixhoven, and Stein by proving that the torsion on $J_1(N)(\mQ)$ is generated by $\Gal(\overline{\mQ}/\mQ)$-orbits of cusps of $X_1(N)_{\overline{\mQ}}$ for $N\leq 55$, $N \neq 54$. 
\end{abstract}

\maketitle

\section{Introduction}
\label{sec:introduction}

Let $E/\mQ$ be an elliptic curve defined over the rationals $\mQ$. In 1901, Poincar\'e \cite{poincare1901} conjectured that the set $E(\mQ)$ of $\mQ$-rational points on $E$ is a finitely generated abelian group. Mordell \cite{mordell1922rational} proved this conjecture in 1922, and Weil \cite{weil1929arithmetique} generalized this in 1929 to an arbitrary abelian variety defined over a number field.

\begin{theorem}[Mordell--Weil]
For an elliptic curve defined over a number field $K$,
$$E(K) \cong \mZ^r \oplus E(K)_{\tors}.$$
\end{theorem}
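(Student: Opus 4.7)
The plan is to follow the classical two-step strategy: prove a ``weak'' finiteness statement modulo $n$ via Galois cohomology, then run a descent using a height function to bootstrap this to full finite generation.

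\textbf{Step 1 (Weak Mordell--Weil).} First I would show that $E(K)/nE(K)$ is finite for some integer $n \geq 2$ (e.g.\ $n=2$). After possibly enlarging $K$ so that $E[n] \subset E(K)$, the Kummer sequence coming from $0 \to E[n] \to E \xrightarrow{[n]} E \to 0$ gives an injection
\[
E(K)/nE(K) \injects \HH^1(\Gal(\Kbar/K),\, E[n]) \isom \Hom(\Gal(\Kbar/K),\, E[n]).
\]
The image lies in the subgroup of homomorphisms unramified outside a fixed finite set $S$ of primes (those of bad reduction of $E$, those dividing $n$, and the archimedean ones), by the criterion of N\'eron--Ogg--Shafarevich applied to points of $n^{-1}E(K)$. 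Finiteness of this cohomological target then follows from the Hermite--Minkowski theorem: there are only finitely many extensions of $K$ of bounded degree unramified outside $S$. (Equivalently: $\OO_{K,S}^\times/(\OO_{K,S}^\times)^n$ is finite by Dirichlet's unit theorem, and one uses finiteness of the $S$-class group to control the Kummer pairing.) A standard restriction--corestriction argument reduces the general case to the case $E[n] \subset E(K)$.

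\textbf{Step 2 (Heights).} Next I would construct the canonical (N\'eron--Tate) height $\hat{h}\colon E(K) \to \R_{\geq 0}$. Starting from a na\"ive Weil height $h$ attached to a symmetric ample divisor (e.g.\ $2(O)$), define $\hat{h}(P) = \lim_{m\to\infty} 4^{-m} h([2^m]P)$. The key properties are: $\hat{h}$ is a quadratic form, $\hat{h}(P)=0$ iff $P$ is torsion, $\hat{h}$ differs from $h$ by a bounded function, and for any $B>0$ the set $\{P \in E(K) : \hat{h}(P) \leq B\}$ is finite (Northcott's property for bounded height in a number field of bounded degree).

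\textbf{Step 3 (Descent).} Finally, fix coset representatives $Q_1,\ldots,Q_r$ of $E(K)/nE(K)$ from Step 1. Given any $P \in E(K)$, write $P = nP_1 + Q_{i_1}$, then $P_1 = nP_2 + Q_{i_2}$, and so on. The parallelogram identity gives $\hat{h}(P_1) \leq \tfrac{1}{n^2}\hat{h}(P) + C$ for a constant $C$ depending only on the $Q_i$, so iterating forces $\hat{h}(P_m)$ into a bounded region in finitely many steps. Hence $E(K)$ is generated by the $Q_i$ together with the finite set of points of height at most a fixed bound, proving finite generation. The torsion subgroup $E(K)_{\tors}$ is then finite because it sits inside this bounded-height locus, and the structure theorem for finitely generated abelian groups yields the stated decomposition $E(K) \cong \mZ^r \oplus E(K)_{\tors}$.

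\textbf{Main obstacle.} The technical heart is Step 1: controlling ramification of the field $K(n^{-1}E(K))/K$ and invoking Hermite--Minkowski. Step 3 is essentially formal once the height machinery is in place, and Step 2 is a standard but nontrivial construction; neither carries the arithmetic depth of the weak theorem.
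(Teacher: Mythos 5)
The paper does not prove this statement at all: it is quoted as classical background, with the proof attributed to Mordell (for $K=\Q$) and Weil (for general number fields and abelian varieties), so there is no ``paper proof'' to compare against. Your outline is the standard modern proof (weak Mordell--Weil via the Kummer sequence and Hermite--Minkowski, N\'eron--Tate heights, then descent), and its structure is sound.

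Two small technical remarks. First, the reduction to the case $E[n] \subset E(K)$ is usually not done by restriction--corestriction alone (that only shows the kernel of $E(K)/nE(K) \to E(L)/nE(L)$ is killed by $[L:K]$); the clean argument is that this kernel injects into $\HH^1(\Gal(L/K), E[n](L))$, which is finite since $\Gal(L/K)$ and $E[n]$ are finite, so finiteness over $L$ implies finiteness over $K$. Second, in the descent step the parallelogram law gives $\hat{h}(P_1) \leq \tfrac{2}{n^2}\hat{h}(P) + C$ rather than $\tfrac{1}{n^2}\hat{h}(P) + C$, since $n^2\hat{h}(P_1) = \hat{h}(P - Q_{i_1}) \leq 2\hat{h}(P) + 2\max_i \hat{h}(Q_i)$; for $n \geq 2$ the factor $2/n^2 \leq 1/2$ still contracts, so the iteration terminates as you claim. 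Neither issue affects the validity of the overall strategy.
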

The free rank $r$ of $E(K)$ is the \cdef{rank of $E$ over $K$} and the finite group $E(K)_{\tors}$ is the \cdef{torsion subgroup of $E(K)$}. Since $E(K)_{\tors}$ is isomorphic to a finite subgroup of $\left(\mathbb{Q} / \mathbb{Z}\right)^2$, we know that this group must be isomorphic to a group of the form
$$\zZ{N} \oplus \zZ{NM},$$
for positive integers $N,M$. The celebrated result of Mazur classified which $N,\,M$ appear for $K = \Q$.

\begin{theorem}[Mazur \cite{Mazur:eisenstein}]\label{thm:Mazur}
Let $E/\mQ$ be an elliptic curve. Then $E(\mQ)_{\tors}$ is isomorphic to one of the following 15 groups:
\begin{align*}
&\zZ{N_1} & \text{ with } &1 \leq N_1 \leq 12, N_1 \neq 11,& \\
&\zZ{2} \oplus \zZ{2N_2} & \text{ with } & 1 \leq N_2 \leq 4 .&
\end{align*}
Furthermore, there exist infinitely many $\overline{\mQ}$-isomorphism classes for each such torsion subgroup.
\end{theorem}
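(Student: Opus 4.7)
The plan is to reformulate the theorem as the computation of rational points on the modular curves $X_1(N)$ and $X_1(2,2N)$. Recall that $X_1(N)_{/\mQ}$ is a smooth projective curve whose non-cuspidal $\mQ$-points parametrize pairs $(E,P)$ with $E/\mQ$ an elliptic curve and $P$ a $\mQ$-rational point of exact order $N$, and that $X_1(2,2N)_{/\mQ}$ plays the analogous role for elliptic curves equipped with a $\mQ$-rational subgroup isomorphic to $\mZ/2\mZ \oplus \mZ/2N\mZ$. Theorem~\ref{thm:Mazur} is then equivalent to two assertions: for each group $G$ in the stated list, the corresponding modular curve has infinitely many non-cuspidal $\mQ$-points with distinct $j$-invariants; and for every other candidate $G$, the corresponding modular curve has no non-cuspidal $\mQ$-points.

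The existence half is handled by noting that each modular curve in the list has genus $0$ and possesses at least one non-cuspidal $\mQ$-point (produced by exhibiting a single elliptic curve over $\mQ$ with the required torsion), hence is $\mQ$-isomorphic to $\mP^1_{\mQ}$; expressing elliptic curves in Tate or Kubert normal form then yields an explicit one-parameter family containing infinitely many distinct $j$-invariants. The negation half contains the real content. One first bounds the prime divisors of $\#E(\mQ)_{\tors}$ by showing that $X_1(p)(\mQ)$ consists only of cusps for every prime $p \geq 11$: the case $p=11$ is direct, since $X_1(11)$ is an elliptic curve of rank $0$ with exactly five rational points, all cusps, while for $p \geq 13$ one invokes Mazur's Eisenstein-quotient construction, which produces a quotient $J_1(p) \twoheadrightarrow J_e$ whose Mordell--Weil group over $\mQ$ is finite and generated by the image of the cuspidal subgroup; combined with a formal-immersion argument at a prime of good reduction, this forces every $\mQ$-point of $X_1(p)$ to be a cusp.

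Having bounded the prime divisors of the order by $7$, one then rules out the remaining composite levels $N$ (the relevant cyclic orders being $14, 15, 16, 18, 20, 21, 24, \ldots$, and the relevant full $2$-torsion extensions being $N \geq 5$) by analyzing each $X_1(N)$ or $X_1(2,2N)$ individually: each such curve has genus $\geq 1$, and one determines its rational points via rank bounds on the Jacobian together with Chabauty-style or formal-immersion arguments ruling out any putative non-cuspidal $\mQ$-point. The main obstacle is the prime-order step via Mazur's Eisenstein quotient, which demands a delicate study of the Hecke algebra and cuspidal subgroup of $J_1(p)$, a descent controlling $J_e(\mQ)$, and a formal-immersion argument at a small auxiliary prime; this is the technical heart of the theorem and has no simpler substitute. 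The composite-level cases, while requiring an explicit model and a Jacobian rank determination for each individual $X_1(N)$, are conceptually routine by comparison, and are in fact exactly the kind of analysis generalized in the present paper from $\mQ$ to cubic number fields.
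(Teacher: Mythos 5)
The paper does not prove this statement at all: Theorem~\ref{thm:Mazur} is Mazur's theorem, quoted purely as background with a citation to \cite{mazur1978primedegreeisogeny}, so there is no internal argument for you to be measured against. Your outline is a broadly accurate summary of how the result is actually proved in the literature --- translation into rational points on $X_1(N)$ and $X_1(2,2N)$, genus-$0$ parametrizations (Tate/Kubert normal form) for the fifteen groups that do occur, the rank-$0$ elliptic curve $X_1(11)$, the Eisenstein-ideal descent plus a specialization/formal-immersion argument for large primes, and a case-by-case treatment of the composite levels (historically due to Kubert, Ligozat, and others, largely before Mazur). Since you explicitly defer the Eisenstein-quotient descent to Mazur, your text is an annotated outline resting on his machinery rather than an independent proof; that is the same logical status as the paper's citation, and is the only realistic option here.

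A few points in your sketch are misstated and worth correcting. First, Mazur's Eisenstein quotient is a quotient of $J_0(p)$, not of $J_1(p)$: the argument sends a hypothetical pair $(E,P)$ with $P$ of order $p$ to the induced non-cuspidal point of $X_0(p)$ and performs the descent on the Eisenstein quotient of $J_0(p)$. Second, this route is empty for $p=13$, since $X_0(13)$ has genus $0$ and $J_0(13)=0$; points of order $13$ are ruled out by the separate Mazur--Tate analysis of the genus-$2$ curve $X_1(13)$ (and $17$, $19$ also admit more elementary treatments), so ``for $p\geq 13$ one invokes the Eisenstein quotient'' is not quite right. Third, the phrase ``formal-immersion argument at a prime of good reduction'' inverts the key point: the auxiliary prime (Mazur works at $2$, or $3$) is one of good reduction for the modular curve and its Jacobian, but the hypothetical elliptic curve necessarily has potentially multiplicative reduction there --- that is what forces the corresponding point of $X_0(p)$ to specialize to a cusp, after which the formal immersion at that cusp together with finiteness of the Mordell--Weil group of the quotient yields the contradiction. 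Finally, a small definitional slip: $X_1(2,2N)$ parametrizes elliptic curves with independent \emph{rational points} of orders $2$ and $2N$, not merely a Galois-stable subgroup isomorphic to $\zZ{2}\oplus\zZ{2N}$.
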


The modular curves $X_1(M,MN)$ classify elliptic curves (and degenerations) together with independent points $P,Q$ of order $M$ and $MN$.
In this language, Mazur's theorem asserts that $X_1(M,MN)(\mQ)$ has no non-cuspidal rational points for $(M,MN)$ outside of the above set.

In 1996, Merel \cite{Merel:uniformity} proved the existence of a uniform bound on the size of $E(K)_{\tors}$ that depends only on the degree of the number field $K.$ Merel's result leads to the natural question of classifying (up to isomorphism) the torsion subgroups of elliptic curves defined over number fields of degree $d$, for a fixed integer $d\geq 1$.

For $d = 2$, this classification was started by Kenku--Momose and completed by Kamienny.

\begin{theorem}[Kenku--Momose \cite{kenku1988torsion}; Kamienny \cite{kamienny1992torsion}]
Let $K/\Q$ be a quadratic extension and $E/K$ be an elliptic curve. Then $E(K)_{\tors}$ is isomorphic to one of the following 26 groups:
\begin{align*}
&\zZ{N_1} & \text{ with } & 1 \leq N_1 \leq 18, N_1 \neq 17, \\
&\zZ{2} \oplus \zZ{2N_2} &\text{ with }& 1 \leq N_2 \leq 6 ,\\
&\zZ{3} \oplus \zZ{3N_3} & \text{ with }&N_3 = 1,2, \\
&\zZ{4} \oplus \zZ{4}. & &
\end{align*}
Furthermore, there exist infinitely many $\overline{\mQ}$-isomorphism classes for each such torsion subgroup.
\end{theorem}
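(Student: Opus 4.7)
The plan is to translate the classification of quadratic torsion into a problem about quadratic points on the modular curves $X_1(M,MN)$. Specifically, one shows that for each pair $(M,MN)$ not on the stated list, the curve $X_1(M,MN)$ has no non-cuspidal quadratic points, while for each pair on the list the curve admits infinitely many such points (typically because it has genus $0$ or $1$ with positive Mordell--Weil rank over $\mQ$). The infinite-family half is the easier direction: for each of the 26 listed groups one exhibits a rational parametrization of (an open subset of) the corresponding $X_1(M,MN)$, or equivalently writes down an explicit family of elliptic curves over $\mQ(t)$ acquiring the prescribed torsion over a quadratic base change. The hard direction is ruling out everything else.

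To rule out a prime $p$ dividing the order of a quadratic torsion point, I would use Kamienny's extension of Mazur's formal immersion method. Pick a quotient abelian variety $A$ of $J_1(p)$ whose Mordell--Weil group $A(\mQ)$ is finite, e.g.\ the winding quotient or an Eisenstein quotient where available. Form the composition
\[
\phi \colon X_1(p)^{(2)} \longrightarrow \Jac(X_1(p)) \longrightarrow A,
\]
where $X_1(p)^{(2)}$ denotes the symmetric square. A quadratic point on $X_1(p)$ produces a rational point on $X_1(p)^{(2)}$, and if $\phi$ is a formal immersion at a rational cusp after reduction modulo a suitable auxiliary prime $\ell$ of good reduction, then the quadratic point must reduce to the cusp at $\ell$; varying $\ell$ forces it to be the cusp itself. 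This argument, run carefully, eliminates all primes $p > 13$.

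For the remaining finite list of $N = M \cdot MN$ one must analyze $X_1(M,MN)$ individually. Two tools combine: (i) gonality bounds, since a curve of gonality $\geq 3$ has only finitely many quadratic points by a Faltings-type argument and those can be enumerated via Brill--Noether/$g^1_2$ analysis on the Jacobian; and (ii) Mazur--Kamienny formal immersion applied to rank-zero quotients of $J_1(M,MN)$, when available. The principal obstacle lies in the cases where $J_1(N)$ itself has positive Mordell--Weil rank over $\mQ$ or where $X_1(N)$ is hyperelliptic (so the gonality bound is tight), e.g.\ $N=16$ and certain mixed levels; in each such case one must locate an isogeny factor of $J_1(N)$ of rank zero that still gives a formal immersion, or else classify the $g^1_2$'s on $X_1(N)$ directly and check which of their fibers correspond to elliptic curves with the forbidden torsion structure. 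Pulling all of these threads together, together with the infinite-family constructions above, yields the stated 26-group classification.
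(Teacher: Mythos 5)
There is nothing in the paper to compare your argument against: this theorem is quoted as background and attributed to Kenku--Momose and Kamienny, with no proof given beyond the remark that the relevant curves $X_1(M,MN)$ all have genus $\leq 2$ and hence infinitely many quadratic points. So your proposal can only be judged as a sketch of the cited literature's argument, and at that level it identifies the right global strategy (quadratic points on $X_1(M,MN)$; Mazur--Kamienny formal immersions into a rank-zero quotient for prime level; case-by-case work at composite level; explicit families for the infinitude claim).

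However, several specific steps are stated incorrectly. First, the infinitude direction does not rest on ``genus $0$ or $1$ with positive Mordell--Weil rank'': most of the listed curves have genus $1$ with rank $0$ (e.g.\ $X_1(11)$, $X_1(14)$, $X_1(15)$) or genus $2$ (e.g.\ $X_1(13)$, $X_1(16)$, $X_1(18)$), and the infinitely many quadratic points come simply from the degree-$2$ map to $\PP^1$ furnished by genus $\leq 2$; positive rank is irrelevant here, and this is exactly the point of the paper's one-line remark. Second, your formal immersion mechanism is run backwards: the formal immersion does not force a quadratic point to reduce to a cusp. One first shows by a Hasse/Weil bound argument at a small prime $\ell$ that no elliptic curve over $\F_\ell$ or $\F_{\ell^2}$ can carry a point of order $p$, so the point of $X_1(p)^{(2)}(\Q)$ is forced to reduce to a cuspidal point mod $\ell$; only then does the formal immersion at that cusp, combined with injectivity of torsion under reduction for the rank-zero quotient $A$, upgrade the congruence to equality and yield a contradiction. ``Varying $\ell$'' plays no role. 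Third, the finiteness criterion ``gonality $\geq 3$ implies finitely many quadratic points'' is false: bielliptic curves mapping to a positive-rank elliptic curve have infinitely many quadratic points (Harris--Silverman); the correct criteria are $d<\gamma/2$, or $d<\gamma$ when the Jacobian has rank $0$, as recalled in Section 2 of the paper, and the enumeration of quadratic points in the residual cases is substantially more delicate than ``Brill--Noether analysis.'' (Also, citing $N=16$ as a case to be ruled out is off the mark, since $\zZ{16}$ is on the allowed list.) These are not cosmetic issues: as written, the exclusion step and the finiteness step of your sketch would not go through.
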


The modular curves $X_1(M,MN)$ for the above list all have genus $\leq 2$; all have infinitely many quadratic points.

\subsection{Statement of main results}
In this paper, we shall be concerned with the case of $d= 3$. Jeon, Kim, and Schweizer \cite{Jeon2004Cubic} determined the torsion structures that appear infinitely often as one varies over all elliptic curves over all cubic number fields (i.e., they classified the pairs $(M,MN)$ for which $X_1(M,MN)$ is trigonal, and show there are no $X_1(M,MN)$ which admit a degree 3 map to a positive rank elliptic curve over $\mQ$). 
In \cite{jeon2011familiescubic}, Jeon, Kim, and Lee constructed infinite families of elliptic curves realizing each of these torsion structures by finding models of the relevant trigonal modular curves.
The first author and Najman \cite{derickxNajman:TorsionCyclicCubic} classified the torsion groups of elliptic curves over
cubic fields with Galois group $\mZ/3\mZ$, complex cubic fields, and totally real cubic fields with Galois group $S_3$.

In 2010, Najman \cite{najman2012torsion} discovered the first example of \emph{sporadic} torsion: the elliptic curve $E/\Q$ with Cremona label \href{http://www.lmfdb.org/EllipticCurve/Q/162b1}{\texttt{162b1}} satisfies $E(\mQ(\zeta_9)^+)_{\tors} \cong \zZ{21}$, and is the only elliptic curve defined over $\Q$ which admits a $K$-rational 21-torsion point.
He then classified the possible torsion subgroups of elliptic curves defined over $\mQ$ when considered over some cubic number field $K$. 
The consideration of the base change of elliptic curves defined over $\mQ$ to other number fields leads to sharper results concerning 
the torsion subgroups which appear over number fields $K$ (see \cite{alvaro2013fieldofdefn}). 
A computer generated table of sporadic points is given in \cite{vanHoeij:LowDegreePlaces}. 

Parent \cite{Parent:cubic-torsion-french, Parent:no-17-torsion} proved that cubic torsion points of prime order $p$ do not exist for $p >13$ (reliant on Kato's \cite{Kato:p-adic-hodge-zeta} subsequent generalization of Kolyvagyn's theorem to quotients of $J_1(N)$). Momose \cite[Theorem B]{Momose:quadratic-torsion} ruled out the cyclic torsion for the cases $N_1 = 27, 64$.
 It was formally conjectured in \cite[Conjecture~1.1.2]{wang2015thesis} that the only possible torsion structures for elliptic curves over $K$ are the ones identified by Jeon--Kim--Schweizer and Najman \cite{Jeon2004Cubic, najman2012torsion}. Wang made progress on this conjecture in his thesis (see \cite{wang:cyclictorsion, wang:cyclictorsion2, wang:cyclictorsion3} for updated versions), and ruled out the existence of cyclic torsion for $
N_1 = 77, 91, 143, 169.$
Bruin and Najman \cite[Theorem 7]{bruinN:criterion-to-rule-out-torsion} ruled out the cyclic cases $N_1 = 40, 49, 55$ and the non-cyclic case $N_2 = 10$.

\vspace{2pt}

Our main theorem completes the classification of torsion over cubic number fields.

\begin{thmx}\label{thm:main}
Let $K/\Q$ be a cubic extension and $E/K$ be an elliptic curve. Then $E(K)_{\tors}$ is isomorphic to one of the following 26 groups:
\begin{align*}
&\zZ{N_1} & \text{ with } & N_1 = 1,\dots ,16, 18,20,21, \\
&\zZ{2} \oplus \zZ{2N_2} & \text{ with } & N_2 = 1,\dots , 7.
\end{align*}
There exist infinitely many $\overline{\mQ}$-isomorphism classes for each such torsion subgroup except for $\zZ{21}$. 
In this case, the elliptic curve with Cremona label \href{http://www.lmfdb.org/EllipticCurve/Q/162b1}{\texttt{162b1}} and minimal Weierstrass equation $y^2+xy+y=x^3 - x^2 - 5x+5$ over $\mQ(\zeta_9)^+ \cong \mQ[x]/(x^3-3x+1) $ is the unique elliptic curve over a cubic field with $\zZ{21}$-torsion, in particular the point $(2\alpha - 3 , 2\alpha - 2)$ has order $21$ where $\alpha$ is a root of $x^3-3x+1$. 
\end{thmx}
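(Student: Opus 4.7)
The plan is to combine the reductions summarized in the introduction with a symmetric-power Mazur-style argument for the remaining levels. By the work of Kamienny--Mazur, Kenku--Momose, Jeon--Kim--Schweizer, Najman, Parent, Momose, Wang, and Bruin--Najman already recalled, the only torsion structures not yet eliminated correspond to cubic points on the modular curves $X_1(N)$ for $N$ in the explicit list of the abstract and on $X_1(2,2N_2)$ for the analogous list with $N_2\geq 8$, together with the single known sporadic example at $N=21$. It therefore suffices to determine the $\mQ$-rational points of the symmetric cubes $X_1(N)^{(3)}$ and $X_1(2,2N_2)^{(3)}$ for each such level and to verify that all but one are supported on cusps. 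The \emph{infinitely many isomorphism classes} clause is already covered by the explicit families of Jeon--Kim--Schweizer and Jeon--Kim--Lee for every allowed group except $\mZ/21\mZ$; the uniqueness in the $\mZ/21\mZ$ case is part of the point determination on $X_1(21)$.

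For each level $N$ in the list, I would proceed as follows. Fix a $\mQ$-rational cusp $\infty$ and form the Abel--Jacobi map $\iota\colon X_1(N)^{(3)}\to J_1(N)$, $D\mapsto [D-3\infty]$. The first main step is to produce a $\mQ$-rational quotient $A$ of $J_1(N)$ with $A(\mQ)$ \emph{finite}: typically $A$ is an isogeny factor of $J_0(N)$, $J_1(N)$, or $J_1(2,2N)$, and finiteness is obtained by combining the analytic rank-zero computations listed in the abstract (promoted to algebraic rank $0$ via Kato's Euler system) with the generalized Conrad--Edixhoven--Stein style statement that $J_1(N)(\mQ)_{\tors}$ is generated by $\Gal(\overline{\mQ}/\mQ)$-orbits of cusps. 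The composition $\iota_A\colon X_1(N)^{(3)}\to A$ then sends $\mQ$-points into the finite explicit group $A(\mQ)$.

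The second step is the formal-immersion argument in the style of Mazur--Kamienny. At a well-chosen prime $p$ of good reduction, one checks that the reduction $\overline{\iota_A}$ is a formal immersion at each cuspidal specialization of interest; this reduces to verifying a rank condition on a matrix of $q$-expansion coefficients of cusp forms in the image of $\Cot(A)\hookrightarrow\Cot(J_1(N))\cong S_2(\Gamma_1(N))$. Combined with the finiteness of the image of $\iota_A$ on $\mQ$-points, the formal-immersion property forces any cubic point $P\in X_1(N)(K)$ to reduce to a cuspidal divisor modulo $p$; varying $p$ (together with the explicit cuspidal torsion description) then pins $\iota(P)$ down to a finite explicit list of candidates, each of which is tested against the moduli interpretation by direct calculation.

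The hardest step will be verifying the formal-immersion hypothesis at \emph{all} relevant cuspidal reductions for the largest levels --- particularly $N=65$, $N=121$, and the $X_1(2,2N)$ analogues --- where the Jacobians decompose into many Hecke-stable factors and the cusp-form computations become heavy, and in selecting primes $p$ at which the rank condition can be arranged uniformly across cusps. These calculations are unavoidably explicit and will rest on a careful choice of $A$ combined with computer algebra. For $N=21$, the surviving non-cuspidal degree-$3$ divisor must additionally be identified with the image of the known $\mQ(\zeta_9)^+$-point on $X_1(21)$ coming from \texttt{162b1}, completing the classification.
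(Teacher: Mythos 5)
Your plan coincides with the paper's strategy only for the two positive-rank levels $N=65$ and $N=121$ (pass to a rank-zero quotient of $J_0(N)$, show the cubic point reduces to a cuspidal divisor, then apply a Mazur--Kamienny formal immersion criterion there), but as a \emph{uniform} strategy for all the remaining levels it has a genuine gap. The step that makes the formal-immersion machine run is the claim that a non-cuspidal cubic point of $X_1(N)$ must reduce modulo $p$ to a cuspidal divisor; this is obtained from the Hasse bound (or a brute-force search) showing that no elliptic curve over $\F_{p^i}$, $i\le 3$, has a point of order $N$. That argument only works when $N$ is large relative to $p^3$: for the rank-zero levels in the list ($N=21,22,24,25,26,28,30,\dots,45$ and the $X_1(2,2N)$ cases) the Hasse bound does not preclude such torsion, and in fact non-cuspidal cubic points genuinely exist on $X_1(21)$ and on the quotient curves the paper uses ($48$ of them on $X_0(30)$, $8$ on $X_H(45)$), so no choice of prime can force cuspidal reduction and the formal-immersion-at-cusps argument cannot even get started, let alone identify the sporadic point on $X_1(21)$. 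Note also that your logic is reversed: the formal immersion does not ``force'' cuspidal reduction; cuspidal reduction is established first (Hasse bound), and the formal immersion together with triviality of the kernel of reduction on a rank-zero quotient then upgrades ``reduces to a cusp'' to ``equals that cuspidal divisor.''

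For the rank-zero levels the paper instead determines $J_1(N)(\Q)$ completely --- rank $0$ from the $L$-value/winding-quotient computations, and the torsion pinned down by combining local (Eichler--Shimura/modular-symbol ``Hecke'') upper bounds with the cuspidal class group as a lower bound (the generalized Conrad--Edixhoven--Stein statement for $N\le 55$) --- and then enumerates the preimages of the Abel--Jacobi map $X_1(N)^{(3)}(\Q)\to J_1(N)(\Q)$ explicitly via Riemann--Roch, either over $\Q$ or, much faster, by intersecting images inside $J(\F_p)$ (using injectivity of torsion under reduction and of Abel--Jacobi when the gonality is at least $4$); for large-genus cases it passes to non-trigonal quotients such as $X_0(30),X_0(33),X_0(35),X_0(39)$ or $X_H(45)$ and then rules out lifts by a twist/torsion analysis, and for $X_1(22),X_1(25)$ a pure point count over $\F_{3^i}$ suffices. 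Finally, even where your strategy does apply ($N=65$), the map to the winding quotient is \emph{not} a formal immersion at every cuspidal divisor of $X_0(65)^{(3)}$; the paper needs an extra argument (its Lemma \ref{lem:whichcusps}, analyzing the cuspidal subscheme of $X_1(65)$ modulo $3$) to restrict to the divisors $3[c]$ with $c$ a rational cusp --- exactly the subtlety whose neglect invalidated Wang's earlier treatment of $N_1=65$ --- so your ``check the rank condition at all relevant cuspidal reductions'' would fail as stated unless the set of relevant reductions is first cut down in this way.
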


\begin{remark}[Enumeration of remaining cases]
 \label{remark-enumeration-of-cases}
Combining the above work of Parent, Momose, Wang, and Bruin--Najman, the remaining task\footnote{Wang's proofs for $N_1 = 22,25,39,40,49, 55,65$ contain an error, which we address in Remark \ref{rem:wang2}. (The remaining cases from Wang's papers are correct.)} is to determine the cubic points on the modular curves $X_1(N_1)$ for
\vspace*{-.25em}
\[
N_1 = 21, 22, 24, 25, 26, 28, 30, 32, 33, 35, 36, 39, 45, 65, 121
\]
and on $X_1(2,2N_2)$ for $N_2 = 8,9$.
(Note that there is a natural map $X_1(4N) \to X_1(2,2N)$ by \cite[Section 1]{JeonK:bielliptic-modular-curves}, and thus the determination of the cubic points on $X_1(2,16)$ and $X_1(2,18)$ determines the cubic points on $X_1(32)$ and $X_1(36)$.)
\end{remark}
\vspace*{-.3em}
\subsection{Strategy}
We first determine the complete list of $N$ for which $J_0(N)$ (resp., $J_1(N)$, resp., $J_1(2,2N)$) has rank 0 (Section \ref{sec:computing-ranks}).
For the $N_i$ from Remark \ref{remark-enumeration-of-cases}, $J_1(N_1)(\Q)$ has rank 0, unless $N_1 = 65, 121$.

For the rank 0 cases, we use a variety of techniques:
\begin{itemize}
\item local arguments (\S \ref{subsec:local}),
\item direct computation of preimages of an Abel--Jacobi map $X_1(N)^{(3)}(\mQ) \to J_1(N)(\mQ)$ (\S \ref{subsec:direct}),
\item passage to modular curve quotients (\S \ref{subsec:directanalysisquotient}),
\item results on the cuspidal subgroup of $J_1(N)(\Q)$ (\S \ref{sec:computing-torsion}), and
\item explicit description of cusps on $X_1(N)$ via modular units (\S \ref{subsec:modularunits}).
\end{itemize}
For the rank 1 cases, we use a modified formal immersion criterion (\S \ref{subsec:analysis121} and \S \ref{subsec:analysis65}). These methods are expounded on in Sections \ref{sec:computing-torsion} and \ref{sec:methods}.
\vspace{-.28em}
\subsection{Outline of paper}
In Section \ref{sec:background}, we recall background on the arithmetic of curves, modular curves, cuspidal subschemes of modular curves, and modular units.
In Section \ref{sec:computing-ranks}, we determine the complete list of $N$ for which $J_0(N)$ (resp., $J_1(N)$, resp., $J_1(2,2N)$) has rank 0, and in Section \ref{sec:computing-torsion}, we investigate the torsion subgroup of $J_1(N)(\Q)$ via modular symbols.
We describe the various techniques used to determine the cubic points on $X_1(N)$ in Section \ref{sec:methods}, and we conclude with the determination of cubic points on modular curves in Sections \ref{sec:cubicpoints0} and \ref{sec:cubicpointspos}.
\vspace{-.28em}
\subsection{Conventions}
Let $K$ be a field and let $X/K$ be a {nice} curve i.e., a smooth, proper, geometrically integral scheme of dimension one.
For such an $X$, let $K(X)$ denote its function field and let $J_X$ denote its Jacobian.
For a field $L \supset K$, let $X_L$ denote the base change of $X$ to $L$.
In some cases, we will use this notation when the curve $X$ is defined over a ring $R$.
Let $\Div X$ be the group of all divisors of $X$, and let $\Div^0 X$ be the subgroup of divisors of degree 0.
Let $\Div_L X$ (resp.~$\Div^0_L X$) be the group of all divisors (resp.~the subgroup of divisors of degree 0) of the curve $X_{L}$.
We note that base change gives inclusions $\Div X \hookrightarrow \Div_{L} X$ and $\Div^0 X \hookrightarrow \Div^0_{L} X$.
For abelian varieties $A_1,A_2$ over $K$, we will use the notation $A_1 \sim_{\mQ} A_2$ to denote that $A_1$ is $\mQ$-isogenous to $A_2$.
We will typically refer to a finite abelian group by its invariants $[n_1,\dots , n_m]$ (ordered by divisibility). 
\vspace{-.28em}
\subsection{Comments on code}
\label{ss:comments-code}
This paper has a large computational component.
We use the computer algebra programs \texttt{Maple}$^{\mathrm{TM}}$ \cite{Maple10}, \texttt{Magma} \cite{Magma}, and \texttt{Sage} \cite{sagemath} to perform these computations.
The code verifying our claims is available at the Github repository
\begin{center}
\url{https://github.com/jmorrow4692/SporadicCubicTorsion},
\end{center}
on the final author's website
\begin{center}
\url{https://math.emory.edu/~dzb/DEvHMZB-sporadicTorsion/},
\end{center}
and attached as an ancillary file on the arXiv page for this paper.
\vspace{-.28em}
\subsection{Summary of cases and techniques}
In Table \ref{table:summary}, we summarize the modular curves from Remark \ref{remark-enumeration-of-cases}, their genera, and the proof technique we use to determine the cubic points on these modular curves.

\begin{table}[h!]
\def\arraystretch{2.4}
\centering
\resizebox{\textwidth}{!}
{\begin{tabular}{|c||c||c||c|}
\hline
\textbf{\textsc{Level}} & \textbf{\textsc{Genus}} & \textbf{\textsc{Method of proof}} & $\arraycolsep=1.5pt\def\arraystretch{.5}\begin{array}{c}
\textbf{\textsc{Genus of}} \\ \textbf{\textsc{quotient}}
\end{array}$
\\
\hline
\hline
32 & 17 & Maps to another curve in this table& $g(X_1(2,16)) = 5$ \\
36 & 17 & Maps to another curve in this table & $g(X_1(2,18)) = 7$ \\
\hline
\hline
 22 & 6 & Local methods at $p=3$ (\S \ref{sec:22}) & N/A\\
25 & 12 & Local methods at $p=3$ & N/A\\
\hline
\hline
 21 & 5 & Direct analysis over $\mQ$ (\S \ref{sec:21}) & N/A\\

\hline
\hline
 26 & 10 & Direct analysis over $\mF_3$ & N/A\\
\hline
\hline
 30 & 9 & Direct analysis over $\mQ$ on $X_0(30)$ (\S \ref{sec:30}) & $g(X_0(30)) = 3$ \\
33 & 21 & Direct analysis over $\mQ$ on $X_0(33)$ & $g(X_0(33)) = 3$\\
35 & 25 & Direct analysis over $\mQ$ on $X_0(35)$ & $g(X_0(35)) = 3$\\
 39 & 33 & Direct analysis over $\mQ$ on $X_0(39)$ & $g(X_0(39)) = 3$\\

 \hline
\hline
 (2,16) & 5 & Hecke bound + direct analysis over $\mF_3$ (\S \ref{sec:216}) & N/A\\
 (2,18) & 7 & Hecke bound + direct analysis over $\mF_5$ & N/A \\
28 & 10 & Hecke bound + direct analysis over $\mF_3$ (\S \ref{subsec:X128})& N/A \\
 \hline
 \hline 
 24 & 5 & Hecke bound + additional argument (\S \ref{thm:cusp_generate}) + direct analysis over $\mF_5$ & N/A\\ 
\hline
\hline
 45 & 41 & Hecke bound + direct analysis over $\mQ$ on $X_H(45)$ ($\S \ref{sec:45}$)& $g(X_H(45)) = 5$\\
 \hline
 \hline
 65 & 121 & Formal immersion criteria ($\S$\ref{subsec:analysis65}) & $g(X_0(65)) = 5$\\
121 & 526 & Formal immersion criteria ($\S$\ref{subsec:analysis121}) & $g(X_0(121)) = 6$\\
\hline
\end{tabular}}
\vspace*{1em}
\caption{Summary of genera of modular curves and proof techniques}
\label{table:summary}
\end{table}
\clearpage

\section{Background}
\label{sec:background}

In this section, we recall some basic definitions concerning curves and the definition of certain modular curves.

\subsection{\text{Geometry of curves}}
First, we review a few definitions from the geometry of curves. Let $X$ be a smooth proper geometrically connected curve defined over a field $K$.

\begin{definition}
The \textsf{gonality} $\gamma(X)$ of $X$ is the minimal degree among all finite morphisms $X \to \P^1_K$. We say that a closed point $P \in X$ has \defi{degree $d$} if $[K(P) : K] = d$.
\end{definition}

\begin{remark}
 If $X$ admits a map $f\colon X \to \P^1_K$ of degree $d$, or a map $f\colon X \to E$ of degree $d$, where $E$ is an elliptic curve with positive rank over $K$, then $X$ also admits infinitely many points of degree $d$ (arising from fibers of $f$).
 Conversely, if $d < \gamma(X)/2$, then $X$ admits only finitely many points of degree $d$ \cite[Prop.~1]{frey:curves-with-infinitely-many}; if the Jacobian of $X$ has rank 0 over $K$ (and in particular $X$ does not admit a non-constant map to an elliptic curve with positive rank over $K$), then in fact for $d < \gamma(X)$, $X$ admits only finitely many points of degree $d$ \cite[Proposition 2.3]{derickxS:quintic-sextic-torsion}.
\end{remark}

\begin{definition}
For a positive integer $d$, we define the \cdef{$d^{\text{th}}$-symmetric power of $X$} to be $X^{(d)} := X^d/S_d$ where $S_d$ is the symmetric group on $d$ letters. The $K$-points of $X^{(d)}$ correspond to effective $K$-rational divisors on $X$ of degree $d$. In particular, a point of $X/K$ of degree $d$ gives rise to a divisor of degree $d$, and thus a point of $X^{(d)}(K)$, and we will often identify a degree $d$ point of $X$ with a divisor of degree $d$ without distinguishing notation.

If $X^{(d)}(K)$ is non-empty, then a fixed $K$-rational divisor $E$ of degree $d$ gives rise to a corresponding \textsf{Abel--Jacobi map}
\[f_{d,E}\colon X^{(d)} \to J_X, \quad D\mapsto D - E.\]
\end{definition}

\subsection{\text{Modular curves}}
We now list the various modular curves we will study.

The modular curve $X_1(M,MN)$ is the moduli space whose non-cuspidal $K$-rational points classify elliptic curves over $K$ together with independent points $P,Q$ of order $M$ and $MN$. By setting $M = 1$, we encounter the modular curves $X_1(N) := X_1(1,N)$ whose non-cuspidal $K$-rational points parametrize elliptic curves over $K$ which have a torsion point of exact order $N$ defined over $K$.

The modular curve $X_0(N)$ is the moduli space whose non-cuspidal $K$-rational points classify elliptic curves with a cyclic subgroup of order $N$ (or equivalently, a cyclic isogeny of degree $N$).

For a subgroup $\Gamma_1(N) \subseteq H \subseteq \Gamma_0(N)$, we can form the ``intermediate'' modular curve $X_H(N)$. This curve is a quotient of $X_1(N)$ by a subgroup of $\Aut (X_1(N))$, and (roughly) parameterizes elliptic curves whose mod $N$ Galois representation has image contained in $H$ (see \cite[Lemma 2.1]{RouseZB:2Adic}).

\begin{remark}[Models of $X_H$]\label{rem:modelsmodular}
For computational purposes, we need explicit equations for many of these modular curves.
For small values of $N$, the \texttt{\texttt{Magma}} intrinsic \texttt{SmallModularCurve($N$)} produces smooth models for the modular curves $X_0(N)$.
For larger values of $N$ (e.g., $N = 65, 121$), we use work of Ozman--Siksek \cite{ozman:quadraticpoints} to find canonical models for $X_0(N)$.
We use the algorithm from \cite[Section 2]{DvHZ} to compute an equation for a quotient of $X_1(45)$ via relations between modular units; see Subsection \ref{sec:45}.

There are several ways to compute models for $X_1(M, MN)$.
We use \cite{sutherland2012optimizedX1} for the equations for $X_1(N)$ and for the $j$-map $j\colon X_1(N) \rra X(1)$.
These models are generally singular.
We can compute models for $X_1(M,MN)$ by taking the normalization of (a particular component of) the fiber product $X_1(M) \times_{X(1)} X_1(MN)$. 
A priori, the fiber product produces a singular model $X$. We can desingularize using the canonical map and the \texttt{\texttt{Magma}} intrinsic \texttt{CanonicalImage(X,CanonicalMap(X))}.
We can also compute a model for $X_1(2, 2N)$ by computing a quotient of $X_1(4N)$ with \cite[Section 2]{DvHZ},
or we can use \cite[Section 3]{derickxS:quintic-sextic-torsion} (we checked that our models are birational to theirs).
\end{remark}

\subsection{\text{Cuspidal subschemes of modular curves}}
Our analysis of cubic points will heavily rely on understanding the cuspidal \textit{subscheme} of $X_1(N)$ and $X_0(N)$.

\begin{lemma}\label{lemma:cuspidal-subscheme}
Let $N \geq 5$ be a positive integer, and let $R = \mZ[1/2N]$.
\begin{enumerate}
\item The cuspidal subscheme of $X_1(N)_R$ is isomorphic to
\[
\bigsqcup_{d\mid N} (\mu_{N/d} \times \mZ/d\mZ)'/[-1],
\]
where the prime notation refers to points of maximal order.
\item The cuspidal subscheme of $X_0(N)_R$ is isomorphic to
\[
\bigsqcup_{d\mid N} (\mu_{\gcd(d, N/d)})'
\]
where the prime notation refers to points of maximal order.
\end{enumerate}
\end{lemma}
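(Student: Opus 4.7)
The plan is to invoke the moduli-theoretic interpretation of $X_1(N)_R$ and $X_0(N)_R$ following Deligne--Rapoport and Katz--Mazur: for $N\geq 5$ and $R = \mZ[1/2N]$, these modular curves are the coarse moduli spaces of generalized elliptic curves equipped with the corresponding level structure, and the cuspidal subscheme parametrizes those generalized elliptic curves whose geometric fibers are Néron $d$-gons for $d\mid N$. Thus the cuspidal subscheme decomposes a priori as a disjoint union indexed by divisors $d$ of $N$, and the proof reduces to classifying the relevant level structures on each Néron $d$-gon.

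For part (1), I would use that a Néron $d$-gon has smooth locus canonically isomorphic to $\G_m \times \zZ{d}$. By Katz--Mazur, a $[\Gamma_1(N)]$-structure on such a polygon (in the Drinfeld sense) is a section $P = (\zeta, a)$ of the smooth locus such that the Cartier divisor $\sum_{i\in\zZ{N}}[iP]$ equals the scheme-theoretic $N$-torsion of $\G_m\times\zZ{d}$. The requirement that this divisor meets every connected component forces $a$ to be a generator of $\zZ{d}$, while the requirement that the induced divisor on each component equals $\mu_{N/d}$ as Cartier divisors forces $\zeta$ to be a primitive $(N/d)$-th root of unity. Hence the scheme of $[\Gamma_1(N)]$-structures on the universal Néron $d$-gon is $(\mu_{N/d} \times \zZ{d})'$. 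Passing from the stack to the coarse moduli $X_1(N)$ introduces the hyperelliptic involution $[-1]$ (which is an automorphism of every Néron polygon as a generalized elliptic curve), acting by $(\zeta,a)\mapsto(\zeta^{-1},-a)$; taking the quotient and summing over $d\mid N$ yields the claimed formula.

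For part (2), the same framework applies with $[\Gamma_0(N)]$-structures---Drinfeld-cyclic subgroup schemes of order $N$---in place of $[\Gamma_1(N)]$-structures. On a Néron $d$-gon, any such subgroup must surject onto the component group $\zZ{d}$ (otherwise it would lie in a strictly smaller Néron subpolygon), with kernel equal to $\mu_{N/d}\subset\G_m$. Such a subgroup is determined by the image of a chosen generator, taken up to the action of $(\zZ{N})^\times$ (which in particular subsumes $[-1]$); a short orbit computation---using that rotating the generator by a unit normalizes the component coordinate freely modulo $\gcd(d,N/d)$---identifies the resulting orbit space with $(\mu_{\gcd(d,N/d)})'$, giving the desired decomposition.

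The main obstacle is the careful treatment of Drinfeld level structures on singular (Néron polygon) fibers, as opposed to smooth elliptic curves; this is precisely the content of Chapter 10 of Katz--Mazur, which I would cite as the principal reference for both claims. The inversion of $2$ is used to avoid fixed-point subtleties for $[-1]$ on the $2$-torsion pieces; the inversion of $N$ ensures that $\mu_{N/d}$ is \'etale over $R$, so that the cuspidal subscheme is itself \'etale over $R$ and the stated identifications hold as disjoint unions of \'etale $R$-schemes rather than merely on underlying point sets.
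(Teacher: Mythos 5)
Your overall framework (Deligne--Rapoport/Katz--Mazur: cusps classify N\'eron $d$-gons with ample level structure, for $d \mid N$) is the right one, and it is essentially what underlies the references the paper cites for this lemma (the paper's proof is a direct citation to Derickx's thesis and to a footnote of the gonality paper, where exactly this bookkeeping is carried out). But your execution has a genuine gap: you quotient only by the inversion $[-1]$, whereas the automorphism group of a N\'eron $d$-gon as a generalized elliptic curve is $\mu_d \rtimes \{\pm 1\}$, with $\mu_d$ acting on the smooth locus $\G_m \times \zZ{d}$ by $\eta\cdot(x,i) = (\eta^i x, i)$. Omitting these $\mu_d$-twists makes your intermediate claims false and your counts wrong. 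Concretely, in part (1) an ample point of exact order $N$ on the $d$-gon is a pair $(\zeta,a)$ with $a \in (\zZ{d})^\times$ and $\zeta \in \mu_N$ satisfying $\mathrm{lcm}(\mathrm{ord}(\zeta),d) = N$; it is not true that $\zeta$ is forced to lie in $\mu_{N/d}$ (on the $2$-gon with $N=8$, for instance, the first coordinate is a primitive $8$th root of unity), and the scheme of such structures has degree $d\,\phi(N/d)\phi(d)$, i.e.\ $d$ times the size of $(\mu_{N/d}\times\zZ{d})'$. It is precisely the quotient by the free $\mu_d$-action $(\zeta,a)\mapsto(\eta^a\zeta,a)$ that collapses $\mu_N$ to $\mu_N/\mu_d \cong \mu_{N/d}$ and yields $(\mu_{N/d}\times\zZ{d})'$, after which one divides by $[-1]$. (A smaller slip: the Drinfeld condition cannot be that $\sum_i [iP]$ equals the full $N$-torsion of $\G_m\times\zZ{d}$, which has order $Nd$; the condition is that it be a subgroup scheme of order $N$ meeting every component.)

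The same omission breaks part (2). Inversion acts trivially on a subgroup, and replacing a generator by a $(\zZ{N})^\times$-multiple does not change the subgroup at all, so the identifications you invoke are vacuous at the level of $\Gamma_0$-structures; the identifications that actually occur come from the $\mu_d$-automorphisms of the polygon. For example, on the N\'eron $4$-gon with $N=8$ there are exactly two ample cyclic subgroups of order $8$ (corresponding to the two cosets of $\mu_2$ in $\mu_8$ consisting of primitive $8$th roots of unity), and they are interchanged by the twist by a primitive element of $\mu_4$; without that identification you would predict two cusps of $X_0(8)$ of type $d=4$, whereas the correct number is $\phi(\gcd(4,2)) = 1$. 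To repair the argument you must quotient by the full automorphism group $\mu_d \rtimes \{\pm 1\}$ of each polygon (and check that, with $2N$ invertible, the quotients are \'etale so the identifications hold scheme-theoretically, not just on points), which is exactly the computation in the cited sections of Derickx's thesis.
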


\begin{proof}
This can be directly computed from \cite[Chapter 1, Sections 1.3, 1.4, and 2.2]{derickx:thesis} and \cite[Footnote 5]{derickx2014gonality}.
\end{proof}

\subsection{\text{Modular units}}\label{subsec:modularunits}
In practice, we will work with some model of $X_1(N)$ (canonical or singular), and will need to explicitly determine
the cusps on our model (e.g., to find an explicit basis for $J_1(N)(\Q)_{\tors}$). 
The naive approach is to compute the poles of the $j$-map but it has high degree when $N$ is large. 
Modular units are a useful alternative.

\begin{definition}
A non-zero element of $\mQ(X_1(N))$ is called a \textsf{modular unit} if all of its poles and roots are cusps.
Let $\mathcal{F}_1(N) \subset \mQ(X_1(N))/\mQ^\times$ be \textsf{the group of modular units modulo $\mQ^\times$}.
\end{definition}
A basis of $\mathcal{F}_1(N)$ mod $\mQ^\times$ is given in \cite[Conjecture 1]{derickx2014gonality} which was
proved by Streng \cite[Theorem 1]{streng2015generators}.
The relevance for our purposes is that this basis is expressed in terms of the same coordinates
used in the defining equations for $X_1(N)$ from \cite{sutherland:X1NTables},
and we have their divisors as well \cite{vanHoeijHanson}.
For further discussion of modular units, we refer the reader to \cite[Section 2]{derickx2014gonality} and \cite{kubertLangModularUnits}.

\section{Modular Jacobians of rank 0}
\label{sec:computing-ranks}

In this section, we compute, with proof, the complete list of $N$ for which $J_0(N)(\Q)$ (resp., $J_1(N)(\Q)$, resp., $J_1(2,2N)(\Q)$) has rank 0. This extends the computation of \cite[Lemma 1 (3)]{derickx2014gonality} and \cite[Theorem 4.1]{derickxS:quintic-sextic-torsion}.

\begin{theorem}
	\label{Prop-J-ranks}
	Let $S_0$ be the set
	\begin{multline*}
	\{1,\ldots,36, 38, \ldots, 42, 44, \ldots, 52, 54, 55, 56, 59, 60, 62, 63, 64, 66, 68, 69, 70, 71, 72, 75, 76, 78,\\ 80,
	81, 84, 87, 90, 94, 95, 96, 98, 100, 104, 105, 108, 110, 119, 120, 126, 132, 140, 144, 150, 168, 180\},
	\end{multline*}
	and let $S_1$ be the set
	\[
	\{1, \ldots, 21, 24, 25, 26, 27, 30, 33, 35, 42, 45\}.
	\]
	Then the following are true.
	\begin{enumerate}
		\item The rank of $J_0(N)(\Q)$ is zero if $N \in S_0$.
		\item The rank of $J_1(N)(\Q)$ is zero if
		$N \in S_0 - \{63, 80, 95, 104, 105, 126, 144\}$.
		\item The rank of $J_1(2,2N)(\Q)$ is zero if $N \in S_1$. 
	\end{enumerate}
Under the assumption of the BSD conjecture the converses of parts (1), (2) and (3) are also true.
\end{theorem}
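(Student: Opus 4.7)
The plan is to reduce all three statements to finite computations of $L$-values at $s=1$ of weight-$2$ cuspidal newforms, using Kato's theorem that $L(A_f,1)\neq 0$ implies $A_f(\mQ)$ is finite. The foundation is the $\mQ$-isogeny decomposition $J_0(N) \sim_\mQ \prod_f A_f^{\sigma_0(N/\mathrm{cond}(f))}$ indexed by Galois orbits of weight-$2$ newforms of level dividing $N$ on $\Gamma_0$, together with the analogous $\Gamma_1$-decomposition of $J_1(N)$ over newform orbits whose nebentypus has conductor dividing $N$. Non-vanishing of $L(A_f,1)$ can be detected exactly in \texttt{Magma} or \texttt{Sage} by pairing $f$ with the winding element in modular symbols, so each individual rank-zero assertion becomes a finite, rigorous computer verification.

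For part (1), I would, for each $N$ in the claimed set $S_0$, enumerate the simple $\mQ$-isogeny factors of $J_0(N)$ and certify $L(A_f,1)\neq 0$ for each; Kato then yields rank zero. Part (2) follows the same template using the $\Gamma_1$-decomposition, and the inclusion of the $J_1$-rank-zero set in $S_0$ is automatic from the surjection $J_1(N) \twoheadrightarrow J_0(N)$. For the converses, for each $N$ outside the respective sets one needs a positive-rank isogeny factor: for $J_0(N)$ it is enough to name a newform of analytic rank $\geq 1$ at some divisor $M\mid N$ (since positive rank is inherited by multiples of the level via the pullback of $A_f$ from $J_0(M)$ to $J_0(N)$), which reduces the task to a short list of seed levels; for $J_1(N)$ one additionally supplies, at each of $N \in \{63,80,95,104,105,126,144\}$, a positive-rank newform of nontrivial nebentypus that survives only to $J_1(N)$ and not to $J_0(N)$.

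For part (3), I would invoke the natural cover $X_1(4N) \twoheadrightarrow X_1(2,2N)$ from \cite[Section 1]{JeonK:bielliptic-modular-curves} to express $J_1(2,2N)$ as an isogeny factor of $J_1(4N)$: then rank zero reduces to rank zero of exactly those newform constituents of $J_1(4N)$ cut out by the appropriate diamond-operator fixed-point condition, again checkable by modular-symbol $L$-value computation, with the converse handled by pulling back a positive-rank factor from $J_1(4N)$ or exhibiting it directly at small $N$. The principal obstacle, and the main reason this theorem is nontrivial rather than a one-line invocation of Kato, is organizational: to make the converse effective one must bound $N$ a priori (so that finitely many "seed" levels cover every $N$ outside the list), and for part (3) one must pin down exactly which newform orbits of level $4N$ actually occur in $J_1(2,2N)$. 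Both tasks are largely prepared for by \cite[Lemma 1]{derickx2014gonality} and \cite[Theorem 4.1]{derickxS:quintic-sextic-torsion}; completing the present theorem amounts to extending those tables with a careful but routine enumeration.
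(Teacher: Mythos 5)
Your forward direction coincides with the paper's: decompose $J_*(N)$ into newform factors and certify $L(A_f,1)\neq 0$ (Kato/Kolyvagin, with the winding-element/modular-symbols check), so the rank-zero claims for $N\in S_0$ and $N\in S_1$ are fine. The genuine gap is in the converse, exactly at the point you defer as ``organizational.'' The rank-zero property is inherited by divisors of the level, so a minimal counterexample to (1) has the form $N\cdot p$ with $N\in S_0$ and $p$ prime --- but $p$ is \emph{a priori} unbounded: every prime $p\notin S_0$ is itself such a minimal candidate, so your ``short list of seed levels'' cannot exist without an argument covering all prime levels, and the tables you cite do not supply one. The paper's missing ingredient here is the moonshine observation (Remark 3.4 of the paper): for $p$ prime the Fricke-fixed part of $J_0(p)$ is $J_0^+(p)$ and all its newform factors have sign $-1$, so rank-zero $J_0(p)(\Q)$ forces $X_0^+(p)$ to have genus $0$, and by Ogg's theorem this happens precisely when $p$ divides the order of the Monster --- fifteen primes. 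Only with this does the converse become the finite computation ``check $N\cdot p\notin S_0$ for $N\in S_0$ and $p$ a Monster prime,'' and parts (2)--(3) then reduce to finitely many levels as well (the paper gets (3) from (1)--(2) via the surjections $X_1(2,2N)\to X_0(4N)$ and $X_1(2,2N)\to X_1(2N)$, with $N=20,26,36$ done by hand on $J_\Delta(4N)$).

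Two smaller points. First, naming ``a newform of analytic rank $\geq 1$'' does not by itself certify positive Mordell--Weil rank: one needs analytic rank exactly $1$ together with Gross--Zagier and Kolyvagin--Logachev (or an explicit point of infinite order); for odd analytic rank $\geq 3$ nothing unconditional is available, and this caveat also sits inside the genus-zero step above. Second, in part (3) your phrase ``pulling back a positive-rank factor from $J_1(4N)$'' goes the wrong way: $J_1(2,2N)\cong J_\Delta(4N)$ is only an isogeny factor of $J_1(4N)$, so a positive-rank constituent of $J_1(4N)$ need not occur in it; you must either verify that the positive-rank newform orbit actually lies in the $\Delta$-part or, as the paper does, use that $J_0(4N)$ and $J_1(2N)$ are quotients of $J_1(2,2N)$, so positive rank of either suffices.
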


In preparation of the proof, we make a series of remarks about computing ranks of modular abelian varieties.

\begin{remark}[Analytic ranks]
	\label{remark:analytic-ranks}
	Let $A$ be a simple factor of $J_1(N)$. By Kolyvagyn's theorem (improved to $J_1(N)$ by Kato \cite[Corollary 14.3]{Kato:p-adic-hodge-zeta}) we know that if $L(A,1)$, the $L$-series of $A$ evaluated at $1$, is non-zero, then the rank of $A(\Q)$ is zero. The BSD conjecture implies that the converse of this statement is also true. A provably correct computation of whether $L(A,1) \neq 0$ is implemented in \texttt{\texttt{Magma}}: \texttt{Decomposition(JOne(N))} computes the simple factors of $J_1(N)$, and for a simple factor $A$, \texttt{IsZeroAt(LSeries(A),1)} provably computes whether $L(A,1) \neq 0$. (This is similar to the approach of \cite[Theorem 4.1]{derickxS:quintic-sextic-torsion}.)
	
	For simple factors of $J_0(N)$ this computation is quite fast, even for relatively large $N$. For factors of $J_1(N)$ this computation is much slower, even for $N$ in the $100,\ldots,180$ range.
\end{remark}

\begin{remark}[Winding quotients]
 \label{remark:winding-quotient}

A faster alternative to working directly with the $L$-series of the simple factors of $J_*(N)$, where $*$ is either 0 or 1, is the following. The \emph{winding quotient} $J_e(N)$ of $J_*(N)$ is the largest quotient with analytic rank zero, and may be described as $J_e(N) = J_*(N)/I_eJ_*(N)$, where $e = \{0,\infty\} \in H_1(X_*(N)(\mC),\Q)$ is the ``winding element'' and $I_e$ is the annihilator of $e$ in the Hecke algebra $\mathbf{T}$ (see \cite[Prop. 1]{Merel:uniformity} and \cite[Definition 4.1 and Theorem 4.2]{derickx2017small}). In particular, $J_*(N)(\Q)$ has rank zero if and only if the projection of the image of $e$ under the Hecke algebra $\mathbf{T}$ onto the cuspidal subspace spans it.

By \cite[Theorem 5.1]{LarioS:hecke}, the Hecke algebra $\mathbf{T}$ on $\Gamma_*(N)$ is generated by $T_n$ with
\[
n \leq \frac{k}{12} \cdot \left[\SL_2(\Z) \colon \Gamma_*(N)\right]
\]
(where we note that their proof, as written, also works for $\Gamma_1(N)$), so this computation is finite and easily implemented in \texttt{\texttt{Magma}}; see \texttt{master-ranks.m} for more detail.
\end{remark}

\begin{remark}[Moonshine and Ogg's Jack Daniels Challenge]
\label{remark:ogg}
 The subspace of weight 2 newforms with sign of functional equation equal to $-1$ is the subspace fixed by the Fricke involution, and thus for $p$ prime, $J_0(p)$ is isogenous to a product $A \times J_0^+(p)$, where $J_0^+(p)$ is the Jacobian of the quotient $X^+_0(p)$ of $X_0(p)$ by the Fricke involution, and where, necessarily, each factor of $A$ has even rank and each factor of $J_0(p)$ has odd rank. In particular, if $J_0(p)(\Q)$ has rank 0, then $X^+_0(p)$ has genus 0.

As is well celebrated, Ogg \cite[Remarque 1]{Ogg:Automorphismes-de-courbes-modulaires} proved that for $p$ prime, $X^+_0(p)$ has genus zero if and only if $p$ divides the order of the Monster group. Hence, if $J_0(N)(\Q)$ has rank 0 and $p$ is a prime divisor of $N$, then $p$ divides
\[
 2^{46} \cdot 3^{20} \cdot 5^9 \cdot 7^6 \cdot 11^2 \cdot 13^3 \cdot 17 \cdot 19 \cdot 23 \cdot 29 \cdot 31 \cdot 41 \cdot 47 \cdot 59 \cdot 71
\]
(and offered a bottle of Jack Daniels for an explanation of the coincidence).

 We note that this approach does not generalize to composite level $N$. For example, $J_0(28)(\Q)$ has rank 0, but $X_0^+(28)$ is the elliptic curve $X_0(14)$. It is still true that one can deduce the signs of the functional equations of newforms via Atkin--Lehner, but the presence of oldforms contributes additional genus to $X_0^+(N)$.

\end{remark}

\begin{proof}[Proof of Theorem \ref{Prop-J-ranks}]

For $N \in S_0$, we compute that $J_0(N)(\Q)$ has rank zero via Remark \ref{remark:analytic-ranks} (using Remark \ref{remark:winding-quotient} to check our computation), and for any integer of the form $N\cdot p$ with $N \in S_0$ and $p$ a prime divisor of the order of the Monster group, we again similarly compute that the rank is non-zero via \texttt{\texttt{Magma}}. This proves part (1).

For part (2), if $J_1(N)(\Q)$ has rank 0, then $J_0(N)(\Q)$ also has rank 0 (but not conversely), so we again check using Remark \ref{remark:analytic-ranks} (and Remark \ref{remark:winding-quotient} for some of the larger $N$) for which $N \in S_0$ $J_1(N)(\Q)$ has rank 0.

For part (3), by \cite[Section 1]{JeonK:bielliptic-modular-curves}, there is an isomorphism $X_{\Delta}(4N) \to X_1(2,2N)$, with $\Delta := \{\pm 1, \pm (2N+1)\}$, and thus a surjection $X_1(4N) \to X_1(2,2N)$.
Thus, if $J_1(4N)(\Q)$ has rank 0, then $J_1(2,2N) (\Q)$ also has rank 0. On the other hand, there are surjective maps $X_1(2,2N) \cong X_{\Delta}(4N) \to X_0(4N)$ and $X_1(2,2N) \to X_1(2N)$, so if either $J_0(4N) (\Q)$ or $J_1(2N) (\Q)$ have positive rank, then $J_1(2,2N) (\Q)$ also has positive rank. Cases (1) and (2) thus determine the rank of $J_1(2,2N) (\Q)$ unless $N = 20,26,36$. The remaining three cases we do by hand using the isomorphism $X_{\Delta}(4N) \to X_1(2,2N)$ and \texttt{Magma}'s intrinsic \verb+JH+ to compute the rank of $J_{\Delta}(4N)$ via Remark \ref{remark:analytic-ranks}.
 \end{proof}
See the file \texttt{master-ranks.m} for code verifying these computations.

\section{Computing rational torsion on modular Jacobians}
\label{sec:computing-torsion}
This section explains how to compute the rational torsion of a modular Jacobian. For an additional technique see \cite[Section 4]{ozman:quadraticpoints}.

\subsection{Local bounds on torsion via reduction}
\label{ssec:local-torsion-bounds}

Let $K$ be a number field, $\mathfrak{p}$ a prime of $K$ lying over a rational prime $p > 2$, and $A/K$ an abelian variety with good reduction at $\mathfrak{p}$. Then by \cite[Appendix]{katz:galois-properties-of-torsion}, if the ramification index $e_{\mathfrak{p}}$ is less than $p-1$, the reduction map $A(K)_{\tors} \rra A(\mF_{\mathfrak{p}})$ is injective. The GCD of $\#A(\mF_{\mathfrak{p}})$, as $\mathfrak{p}$ ranges over such primes, gives a naive upper bound on $A(K)_{\tors}$. For rational torsion on modular Jacobians, one can quickly compute $\#A(\mF_{{p}})$ via the coefficients of the corresponding modular forms, which \texttt{Magma} has packaged into the intrinsic \texttt{TorsionMultiple}.

Comparing orders is usually insufficient (e.g., if $A(\Q)_{\tors}$ is cyclic and $A(\mF_{{p}})$ is non cyclic, the output of \texttt{TorsionMultiple} is usually larger than $\#A(\Q)_{\tors}$; see Example \ref{example:21-torsion}). To improve these bounds, we compute the ``GCD'' of the groups $A(\mF_{{p}})$ for various ${p}$; more precisely, given abelian groups $A_1,\ldots, A_n$, we define the $\GCD(A_1,\ldots,A_n)$ to be the largest abelian group $A$ such that each $A_i$ contains a subgroup isomorphic to $A$.

\begin{example}[Torsion on $J_1(21)$]
 \label{example:21-torsion}
 To demonstrate this, we consider the rational torsion on $A = J_1(21)$, which is a $5$-dimensional modular abelian variety. The output of the intrinsic \texttt{TorsionMultiple(JOne(21))} yields a bound of $728$, but $A(\mF_{5})$ has invariants $[ 2184 ]$ and $A(\mF_{11})$ has invariants $[14, 6916]$, and these groups have GCD $[364]$. We computed $A(\mF_{p})$ by reducing a model for $X_1(21)$ modulo $p$ and using \texttt{\texttt{Magma}} 's \texttt{ClassGroup} intrinsic; see the \texttt{\texttt{Magma}} file \texttt{\texttt{master-21.m}}.
\end{example}

\subsection{Better local bounds on torsion via Eichler--Shimura and modular symbols}
\label{ssec:better-local-torsion-bounds}

The naive approach above does not incorporate the action of complex conjugation, and slows quickly as the genus grows. We describe here a substantial improvement, derived from the Eichler--Shimura relation.
For an intermediate modular curve $X_H(N)$, let $J_H(N)$ denote its Jacobian. 

For a prime $q \nmid 2N$, let $T_q$ be the $q$-th Hecke operator. 
By the Eichler--Shimura relation, the kernel of the operator
\[
 T_q-q\diamondop{q}-1 \colon J_H(N)(\Qbar)_{\tors} \to J_H(N)(\Qbar)_{\tors}
\]
contains the prime to $q$ torsion in $J_H(N)(\Q)$ (cf.~\cite[Proposition 5.2]{derickx2017small} and \cite[page 87]{DiamondI:modularCurves}).
Additionally, let
\[
 \tau\colon J_H(N)(\overline{\Q}) \to J_H(N)(\overline{\Q})
\]
be complex conjugation. Then it is clear that $\tau-1$ vanishes on $J_H(N)(\Q)$, and thus also vanishes on $J_H(N)(\Q)_{\tors}$.

Thus, for a finite set of primes $q_1,\ldots,q_n$ (still coprime to $2N$) with $n \geq 2$, 
\begin{equation}\label{eqn:MH}
M_H := J_H(N)(\overline{\Q})_{\tors}[T_{q_1}-q_1\diamondop{q_1}-1,\ldots,T_{q_n}-q_n\diamondop{q_n}-1,\tau-1]
\end{equation}
contains $J_H(N)(\Q)_{\tors}$.
We can efficiently compute $M_H$ as follows. Under the uniformization
\[
J_H(N)(\C) \cong H_1(X_H(N)(\C),\C)/H_1(X_H(N)(\C),\Z)
\]
we can identify the geometric torsion as
\[
J_H(N)(\overline{\Q})_{\tors} \cong H_1(X_H(N)(\C),\Q)/H_1(X_H(N)(\C),\Z).
\]
While it does not make sense to ask this identification to be Galois equivariant, it does commute with the Hecke and diamond operators, and with complex conjugation. The right hand side lends itself very well to explicit computations with \texttt{Sage} using modular symbols (and is very fast, since the computations are now essentially linear algebra). This is implemented in the module \texttt{CuspidalClassgroup} from \cite{derickx:X1Ncode}, and is also available in our accompanying code (see \ref{ss:comments-code}).

We will call this upper bound on rational torsion we get from computing $M_H$ the \cdef{Hecke bound}. \\

To demonstrate how the Hecke bound produces better bounds than the local bounds from Subsection \ref{ssec:local-torsion-bounds}, we will consider the following two examples. 

\begin{example}[Torsion on $J_1(28)$]\label{exam:Hecke28}
Consider the rational torsion on $A = J_1(28)$, which is a $10$-dimensional modular abelian variety. The intrinsic \texttt{TorsionMultiple(JOne(28))} produces a bound of $359424$. We compute that $A(\mF_{3})$ has invariants $[ 4, 4, 24, 936]$ and $A(\mF_{5})$ has invariants $[ 2, 2, 8, 8, 312, 936]$, which have GCD $[ 4,4,24,936]$. 
The Hecke bound give an upper bound of $[2, 4, 12, 936]$, which improves upon the GCD bound; see the \texttt{Sage} file \texttt{torsionComputations.py}. 
\end{example}

\begin{example}[Torsion on $J_1(2,18)$]\label{exam:Hecke218}
Consider the rational torsion on $A = J_1(2,18)$, which is a $7$-dimensional modular abelian variety. We compute that $A(\mF_{5})$ has invariants $[ 6, 84, 252 ]$ and $A(\mF_{7})$ has invariants $[ 3, 6, 6, 126, 126]$, which have GCD $[6,42,126]$. 
The Hecke bound give an upper bound of $[2, 42,126]$, which improves upon the GCD bound; see the \texttt{Sage} file \texttt{torsionComputations.py}. 
\end{example}

\subsection{Cuspidal torsion and a generalized Conrad--Edixhoven--Stein conjecture}
\label{ssec:cuspidal-torsion}
By the Manin--Drinfeld theorem \cite{maninParabolic, drinfeldTwotheorems}, cuspidal divisors are torsion; conversely, it is expected that $J(\Q)_{\tors}$ is cuspidal for a modular Jacobian $J$.
More precisely, we conjecture that the torsion on $J_1(N)(\Q)$ is generated by the $\Gal(\overline{\Q}/\Q)$-orbits of cusps (see Conjecture \ref{conj:torsion}).

This is a conjecture of Conrad--Edixhoven--Stein \cite[Conjecture 6.2.2]{ConradES:J-connected-fibers} for the modular Jacobian $J_1(p)$ where $p$ is a prime. The authors prove this conjecture for all primes $p \leq 157$ except for $p = 29,97,101,109,$ and $113$, and the case of $ p =29$ was proved in \cite[Theorem 6.4]{derickx2017small}. Moreover, their conjecture is true for all primes $p$ such that $J_1(p)(\Q)$ has rank 0.
For composite $N$, the torsion subgroup of $J_1(N)$ generated by $\Gal(\overline{\Q}/\Q)$-orbits of cusps has been studied, and in some special cases, the Conrad--Edixhoven--Stein conjecture has been proved; see the following works 
\cite{yu:cuspidalclassnumber, takagi:classnumberp, takai:cuspidalclassnumber3power, hazama:cuspidalclassnumber,  csirik:KernelEisensteinIdeal, takagi:classnumber2power, yang:modularunits, yang:CuspidalTorsion, sun:cuspidalclassnumber, chen:CuspidalTorsion, takagi:CupsidalClassNumber2p, ohta:CuspidalTorsion, takagi:CuspidalTorsion2p}.

In this subsection, we will use the construction of $M_H$ from Equation \eqref{eqn:MH} and the results from Subsection \ref{subsec:modularunits} to determine when the rational torsion on certain modular Jacobians is cuspidal. 
First, we need to establish some definitions.

\begin{definition}\label{def:generatedcusps}
We define the following subgroups of $\Div X_H(N)$.
\begin{itemize}
\item Let $\Cusp X_H(N)$ denote the free abelian group generated by the cusps of $X_{H_{\overline{\mathbb{Q}}}}$.
\item Let $\Cusp_\Q X_H(N) := \Cusp X_H(N) \cap \, \Div_{\Q} X_H(N)$ denote the subgroup generated by the $\Gal(\overline{\Q}/\Q)$-orbits of cusps.
\end{itemize}
\end{definition}

\begin{definition}
We define the following quotients of the divisor groups from Definition \ref{def:generatedcusps}.
\begin{itemize}
\item Let $\,\PrinCusp X_H(N)$ denote the principal divisors in $\Cuspo X_H(N)$, and let $\ClCusp X_H(N):= \Cuspo X_H(N) /\PrinCusp X_H(N)$ denote the quotient.
\item Let $\PrinCusp_\Q X_H(N)$ denote the principal divisors in $\Cuspo_\Q X_H(N)$, and let $\ClCusp_\Q X_H(N):= \Cuspo_\Q X_H(N) /\PrinCusp X_H(N)$ denote the quotient.
\end{itemize}
 \end{definition}

With these definitions it is clear that determining if the rational torsion on the modular Jacobian $J_H$ is cuspidal boils down to whether $\ClCusp_\Q X_H(N)= J_H(\Q)_{\tors}$.

Subsection \ref{ssec:better-local-torsion-bounds} gave an inclusion $J_H(\mQ)_{\tors} \subset M_H$ (the ``Hecke bound'').
In order to get lower bounds, we need to compute $\ClCusp_\Q X_H(N)$, which we now describe. 
Let $G:=\Gal(\Q(\zeta_N)/\Q) \cong (\Z/N\Z)^\times$ be the Galois group of the cyclotomic field obtained by adjoining an $N$-th root of unity to $\Q$. 
To determine the group $\ClCusp_\Q X_H(N)$, we first use results and code from \cite{derickx2014gonality} and actions of diamond operators to compute $\ClCusp X_H(N)$, and from here we take $G$-invariants as $\Cuspo_{\Q} X_H(N)=(\Cuspo X_H(N))^G$. 
An implementation to compute $\ClCusp_\Q X_1(N)$ is available at \cite{vanHoeij:X1Ncode}, explained in \cite{vanHoeijHanson},
and a \texttt{Sage} version extending this to $\ClCusp_\Q X_H(N)$ can be found in the module \texttt{MiscellaneousFunctions} from \cite{derickx:X1Ncode}.
We illustrate the utility of this code in the following examples.

\begin{example}[Torsion on $J_1(28)$]
The code finds $\ClCusp_\Q X_1(28) \cong [2, 4, 12, 936]$. 
Combining this with the Hecke bound from Example \ref{exam:Hecke28} gives $\ClCusp_\Q X_1(28) = J_1(28)(\Q)_{\tors}$, and in particular, all of the rational torsion on $J_1(28)$ is cuspidal; see the \texttt{Sage} file \texttt{torsionComputations.py}. 
\end{example}

\begin{example}[Torsion on $J_1(2,18)$]
The code finds $\ClCusp_\Q X_1(2,18) \cong [2, 42, 126]$. 
Combining this with the Hecke bound from Example \ref{exam:Hecke218} gives $\ClCusp_\Q X_1(2,18) = J_1(2,18)(\Q)_{\tors}$, and in particular, all of the rational torsion on $J_1(2,18)$ is cuspidal; see the \texttt{Sage} file \texttt{torsionComputations.py}. 
\end{example}

To study the question of whether the rational torsion on the modular Jacobian is cuspidal in more generality, we proceed as follows. 
Since $(\ClCusp X_H(N))^{G} \subset J_H(\Q)$, it is necessary that the map 
\[
\Cuspo_{\Q} X_H(N)=(\Cuspo X_H(N))^G \to (\ClCusp X_H(N))^G
\]
is surjective in order for the rational torsion to be cuspidal. 

For the remainder of this section, we will focus our attention on determining the rational torsion on modular curves $X_1(N)$ and $X_1(2,2N)$ with modular Jacobians $J_1(N)$ and $J_1(2,2N)$, respectively. 
As a first step, we show that for $N \leq 55$, the above map on divisors is surjective. 

\begin{proposition}\label{prop:surjectivity}
Let $N \leq 55$ be an integer. Then $\ClCusp_\Q X_1(N)=(\ClCusp X_1(N))^G$.
\end{proposition}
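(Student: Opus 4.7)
The plan is to reduce the proposition to an explicit finite computation, uniform in $N$, carried out for each of the $55$ values in the range. Consider the short exact sequence of $G$-modules
\[
0 \to \PrinCusp X_1(N) \to \Cuspo X_1(N) \to \ClCusp X_1(N) \to 0.
\]
Taking $G$-cohomology gives
\[
0 \to (\PrinCusp X_1(N))^G \to \Cuspo_{\Q} X_1(N) \xrightarrow{\pi} (\ClCusp X_1(N))^G \to H^1\bigl(G,\PrinCusp X_1(N)\bigr).
\]
The inclusion $\ClCusp_\Q X_1(N) \subseteq (\ClCusp X_1(N))^G$ is automatic, so the proposition is equivalent to surjectivity of $\pi$, i.e., vanishing of the image of the connecting map. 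Rather than attempt to show that the whole $H^1$ vanishes, I would verify surjectivity of $\pi$ directly in each case.

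Two ingredients make this feasible. First, Lemma \ref{lemma:cuspidal-subscheme} explicitly identifies the cusps of $X_1(N)_{\overline{\Q}}$ as the $G \cong (\Z/N\Z)^\times$-set $\bigsqcup_{d\mid N}(\mu_{N/d}\times\Z/d\Z)'/[-1]$, giving $\Cuspo X_1(N)$ together with the diamond action of $G$. Second, by Streng's theorem (\cite[Theorem 1]{streng2015generators}) the group of modular units $\mathcal{F}_1(N)$ admits an explicit basis, whose divisors are determined in \cite{vanHoeijHanson}; their images span $\PrinCusp X_1(N)$ inside $\Cuspo X_1(N)$.

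With these inputs in hand, the computation for each $N$ goes as follows. Build $\Cuspo X_1(N)$ as a $\Z[G]$-module via Lemma \ref{lemma:cuspidal-subscheme}; assemble $\PrinCusp X_1(N)$ as the subgroup generated by the divisors of Streng's basis; compute the quotient $\ClCusp X_1(N)$ by Smith normal form and propagate the $G$-action to it; then use linear algebra over $\Z$ to compute the fixed subgroup $(\ClCusp X_1(N))^G$ and the image $\ClCusp_\Q X_1(N)$ of $(\Cuspo X_1(N))^G$, and check equality. This is precisely what the modules implemented in \cite{vanHoeij:X1Ncode, derickx:X1Ncode} do.

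The main obstacle is bookkeeping rather than mathematics: the indexing of cusps used by the modular-unit basis in \cite{vanHoeijHanson} is not the same as the parametrization of cusps in Lemma \ref{lemma:cuspidal-subscheme}, so translating between them and applying the diamond action to the correct generators must be done carefully. Once the $G$-action is encoded consistently on both $\Cuspo X_1(N)$ and on the divisors of Streng's basis, the verification reduces to a finite linear-algebra computation to be executed for each $N \le 55$.
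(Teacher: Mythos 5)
Your reduction to surjectivity of the map $(\Cuspo X_1(N))^G \to (\ClCusp X_1(N))^G$, verified by an explicit finite computation for each $N \le 55$ using the cusp parametrization of Lemma \ref{lemma:cuspidal-subscheme} and the divisors of Streng's modular-unit basis, is correct and is essentially the paper's own proof, which performs the same surjectivity check case by case (the paper phrases the computation in terms of modular symbols, but the surrounding discussion in Section \ref{ssec:cuspidal-torsion} describes exactly the modular-unit/diamond-operator machinery you outline). The cohomological framing via $H^1(G,\PrinCusp X_1(N))$ is harmless but unused, since you verify surjectivity directly rather than vanishing of the connecting map.
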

\begin{proof}
We verify the result  by computing the maps 
\[(\Cuspo X_1(N))^G \to (\ClCusp X_1(N))^G\]
in terms of modular symbols and the computation showed it was surjective in all cases. 
\end{proof}

For code verifying these claims as well as those in Theorem~\ref{thm:cusp_generate},  Corollary~\ref{coro:almostallN}, and Proposition~\ref{prop:cusp_generate_2N}
see the \texttt{Sage} file \texttt{torsionComputations.py}. \\

For the main theorem in this section, we will also need the following lemma.

\begin{lemma}\label{lem:equality_test}
Let $R$ be a commutative ring $M$ be an $R$-module whose cardinality is finite and $C, M', T$ be $R$-submodules of $M$ such that $C\subseteq M'$, $C \subseteq T$ and $M' \cap T = C$. Assume that for all maximal ideals $m$ of $R$ one has $(M/C)[m] = (M'/C)[m]$, then $T=C$.
\end{lemma}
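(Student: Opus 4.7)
The plan is to show $T = C$ by proving the reverse inclusion $T \subseteq C$, equivalently that $T/C$ vanishes inside $M/C$. My strategy is a local-at-maximal-ideals argument: I will show $(T/C)[m] = 0$ for every maximal ideal $m$ of $R$, and then conclude using the fact that a nonzero finite $R$-module always has nontrivial $m$-torsion for some maximal ideal.

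For the local vanishing, fix a maximal ideal $m$. The inclusion $T \hookrightarrow M$ induces an injection $(T/C)[m] \hookrightarrow (M/C)[m]$. By hypothesis, $(M/C)[m] = (M'/C)[m] \subseteq M'/C$, so any class $\bar x \in (T/C)[m]$ is represented by some $x \in T$ with $x + C \in M'/C$; combined with $C \subseteq M'$ this forces $x \in M'$. But then $x \in T \cap M' = C$, so $\bar x = 0$. This diagram chase is the heart of the argument.

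For the final implication, I would invoke the auxiliary claim: if $N$ is a nonzero finite $R$-module, then $N[m] \neq 0$ for some maximal ideal $m$ of $R$. Applied to $N = T/C$, which is finite because $M$ is, the local vanishing $(T/C)[m] = 0$ for all $m$ forces $T/C = 0$. To justify the auxiliary claim I would pass to $\bar R := R/\mathrm{Ann}(T/C)$, a finite (hence Artinian) ring over which $T/C$ is still a nonzero module; any nonzero module over an Artinian ring contains a simple submodule, which is necessarily of the form $\bar R/\bar m$ for some maximal ideal $\bar m \subseteq \bar R$, and the preimage $m \subseteq R$ is then a maximal ideal with $(T/C)[m] \neq 0$.

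I do not anticipate a serious obstacle: the content is really the elementary diagram chase, and the finiteness hypothesis on $M$ is exactly what turns the pointwise vanishing test into a global one. The only thing one needs to be slightly careful about is that the hypothesis $(M/C)[m] = (M'/C)[m]$ is an equality of subgroups of $M/C$, so that it gives the containment $(M/C)[m] \subseteq M'/C$ used in the chase.
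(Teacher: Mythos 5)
Your proposal is correct and follows essentially the same route as the paper: the key step in both is that $(T/C)[m]\subseteq (M/C)[m]=(M'/C)[m]$ together with $M'\cap T=C$ forces the $m$-torsion of $T/C$ to vanish for every maximal ideal $m$. The only cosmetic difference is the final globalization: the paper decomposes the finite module $T/C$ into its $m$-primary components, while you invoke the (equivalent) fact that a nonzero finite module has a simple submodule $R/m$, so nothing of substance differs.
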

\begin{proof}
By taking the quotient by $C$, one may assume that $C=0$.
Since $T$ is of finite cardinality, it is isomorphic to $\otimes_{i=1}^k T[m_i^\infty]$ for some finite sequence of maximal ideals $m_1,\dots ,m_k$. Let $m=m_i$ be one of these maximal ideals. Due to the equalities $M[m]=M'[m]$ and $M'\cap T =0$, one gets $T[m] = M[m]\cap T = M'[m]\cap T =0$, and thence $T[m^\infty] =0$. Since this holds for all $m_i$, $T=0$.
\end{proof}

\begin{theorem}\label{thm:cusp_generate}
Let $N \leq 55$, $N \neq 54$ be an integer. Then
\[
\ClCusp_\Q X_1(N)= J_1(N)(\Q)_{\tors}
\]
If $N = 54$, then the index of $\ClCusp_\Q X_1(N)$ in $J_1(N)(\Q)_{\tors}$ is a divisor of $3$.
\end{theorem}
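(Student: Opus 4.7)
For each $N\le 55$ the strategy is to sandwich $J_1(N)(\Q)_{\tors}$ between two computable groups and compare them. The lower bound is
\[
\ClCusp_\Q X_1(N) \subseteq J_1(N)(\Q)_{\tors},
\]
from the Manin--Drinfeld theorem, computed using the modular-unit basis of \cite{streng2015generators} together with the diamond-operator action of $G=(\Z/N\Z)^\times$; Proposition \ref{prop:surjectivity} lets us identify $\ClCusp_\Q X_1(N)$ with the $G$-invariants $(\ClCusp X_1(N))^G$, so the computation reduces to linear algebra on the cusp divisor group. The upper bound is
\[
J_1(N)(\Q)_{\tors}\subseteq M_H,
\]
the Hecke module $M_H$ of \eqref{eqn:MH}, computed with modular symbols via the \texttt{CuspidalClassgroup} module of \cite{derickx:X1Ncode}.

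First I would run both computations in parallel for every $N\le 55$ and compare the resulting abelian groups. Whenever $\ClCusp_\Q X_1(N)$ and $M_H$ already coincide as isomorphism types (equivalently, share the same cardinality), the sandwich collapses to an equality $\ClCusp_\Q X_1(N)=J_1(N)(\Q)_{\tors}=M_H$ and the theorem is immediate for that $N$; the behaviour observed in Examples \ref{exam:Hecke28} and \ref{exam:Hecke218} strongly suggests that this already handles the bulk of the list.

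For the $N$ where the two bounds have \emph{different} cardinalities, I would invoke Lemma \ref{lem:equality_test} with $R=\mathbf{T}$ the Hecke algebra acting on $J_1(N)$, with $M$ an ambient finite $R$-submodule of $J_1(N)(\overline{\Q})_{\tors}$ containing both $M_H$ and the full geometric cuspidal subgroup $\ClCusp X_1(N)$, with $M' = \ClCusp X_1(N)$, $T = J_1(N)(\Q)_{\tors}$, and $C = M'\cap T$. The identification $C = \ClCusp_\Q X_1(N)$ is tautological because rational points of the (Galois-stable) cuspidal subgroup are precisely its $\Gal(\overline{\Q}/\Q)$-fixed points; the inclusions $C\subseteq M'$, $C\subseteq T$ and $M'\cap T = C$ hold on the nose, so the only nontrivial hypothesis left to verify is the equality $(M/C)[m]=(M'/C)[m]$ at every maximal ideal $m$ of $\mathbf{T}$. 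This is a finite linear-algebra check on modular symbols at each residue characteristic dividing $\#(M/C)$ and reuses the machinery already deployed for $M_H$.

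\textbf{Main obstacle.} The hard case is $N=54$. I expect the $m$-torsion hypothesis of Lemma \ref{lem:equality_test} to fail at a single maximal ideal $m$ of residue characteristic $3$, so strict equality cannot be forced by the lemma. The delicate point is to track the failure quantitatively: by bounding the dimension of the offending $m$-torsion eigenspace in $(M/C)/(M'/C)$ one should conclude that $J_1(54)(\Q)_{\tors}/\ClCusp_\Q X_1(54)$ is $3$-torsion of order at most $3$, which is the weaker statement claimed for $N=54$. Keeping this residual computation exact, and confirming that no analogous failure occurs at any other $N\le 55$, is where the bulk of the real work lies.
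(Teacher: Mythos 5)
Your plan is essentially the paper's proof: the same sandwich $\ClCusp_\Q X_1(N)\subseteq J_1(N)(\Q)_{\tors}\subseteq M_H$, the same reliance on Proposition \ref{prop:surjectivity}, and the same use of Lemma \ref{lem:equality_test} for the levels where the two computable bounds do not already force equality. The paper differs only in implementation: it first checks the containment $M\subseteq \ClCusp X_1(N)$ of the full geometric groups (rather than comparing cardinalities of $\ClCusp_\Q X_1(N)$ and $M_H$), it applies the lemma with $R=\Z$ and the primes $p\mid \#M$ (rather than maximal ideals of $\mathbf{T}$), and it takes $M'=M\cap \ClCusp X_1(N)$ inside the ambient module $M=M_H$ (rather than $M'=\ClCusp X_1(N)$ inside a larger ambient module). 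One conceptual correction: the identification $M'\cap T=\ClCusp_\Q X_1(N)$ is \emph{not} tautological, since a Galois-invariant cuspidal divisor \emph{class} need not be represented by a Galois-invariant cuspidal divisor; this is precisely the content of Proposition \ref{prop:surjectivity}, which you cite earlier, so it is a mislabeling rather than a gap.

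Where your sketch is genuinely thinner than the paper is $N=54$. Bounding the dimension of the offending $m$-torsion eigenspace of $(M/C)/(M'/C)$ does not by itself bound $[J_1(54)(\Q)_{\tors}:\ClCusp_\Q X_1(54)]$: you additionally need that $T/C$ injects into $M/M'$ (this follows from $M'\cap T=C$), that $T/C$ is supported only at the bad maximal ideal $m$ (this follows from running the lemma's argument at the remaining maximal ideals), and then a bound on the full $m$-primary part of $M/M'$ rather than just its $m$-torsion --- a one-dimensional $\F_3$-eigenspace is compatible with a cyclic quotient of order $9$, so the ``order at most $3$'' conclusion does not yet follow. The paper sidesteps this by computing the index $[M+\ClCusp X_1(54):\ClCusp X_1(54)]$ itself and taking $\Gal(\overline{\Q}/\Q)$-invariants of the exact sequence $0 \to \ClCusp X_1(54) \to M+\ClCusp X_1(54) \to (M+\ClCusp X_1(54))/\ClCusp X_1(54) \to 0$, which immediately gives the divisor-of-$3$ bound; you should replace the eigenspace-dimension step by a bound on this full index (or on the full $m$-primary part of $M/M'$).
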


\begin{proof}
This was verified using a \texttt{Sage} computation, which we now describe. 
We compute the $M$ from Equation \eqref{eqn:MH} for the modular Jacobians $J_1(N)$ for $N \leq 55$. 
For $N \neq 24,32,33,40,48,54$, our computation shows that $M \subseteq \ClCusp X_1(N)$ holds and hence
\[
(\ClCusp X_1(N))^G \subseteq J_1(N)(\Q)_{\tors} \subseteq M^G \subseteq (\ClCusp X_1(N))^G. 
\]
Therefore, the result follows from Proposition \ref{prop:surjectivity} except for the cases of $N = 24, 24,32,33,40,48,54$. 
For these cases, define $M' =M \cap \ClCusp X_1(N)$. The \texttt{Sage} computation then shows that for all primes $p$ that divide $\#M$, we have $(M/\ClCusp_\Q X_1(N))[p] = (M'/\ClCusp_\Q X_1(N))[p]$ holds for all the above $N$ except for $54$. The theorem follows by applying Lemma \ref{lem:equality_test} with $C=\ClCusp_\Q X_1(N)$ and $T=J_1(N)(\Q)_{\tors}$.

For the $N = 54$, we computed the index $i:=[M+\ClCusp X_1(N):\ClCusp X_1(N)]$; taking $\Gal(\overline{\mQ}/\mQ)$-invariants of the sequence 
\[0 \to \ClCusp X_1(N) \to M+\ClCusp X_1(N) \to (M+\ClCusp X_1(N))/\ClCusp X_1(N) \to 0\] 
shows that $[(M+\ClCusp X_1(N))^{\Gal(\overline{\mQ}/\mQ)}:(\ClCusp X_1(N))^{\Gal(\overline{\mQ}/\mQ)}]$ divides $3$. 
Since $J_1(\Q)_{\tors} =(M+\ClCusp X_1(N))^{\Gal(\overline{\mQ}/\mQ) }$ and $\ClCusp X_1(N)^{\Gal(\overline{\mQ}/\mQ)}=\ClCusp_\Q X_1(N))$, we use this index to get the multiplicative upper bound of $3$ mentioned in the second statement.
\end{proof}

The only $N \leq 55$ such that $J_1(N)$ has positive rank are $37,43$ and $53$ so as a corollary we immediately get that:

\begin{corollary}\label{coro:almostallN}
Let $N \leq 55$ be an integer. 
If $N \neq 37,43,53,$ or $54$, then 
\[
\ClCusp_\Q X_1(N)= J_1(N)(\Q).
\] 
\end{corollary}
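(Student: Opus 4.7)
The plan is to combine the preceding Theorem \ref{thm:cusp_generate} with the rank classification from Theorem \ref{Prop-J-ranks}(2). The theorem gives the equality $\ClCusp_\Q X_1(N) = J_1(N)(\Q)_{\tors}$ for all $N \leq 55$, $N \neq 54$, so to upgrade the right-hand side from the torsion subgroup to the full Mordell--Weil group it suffices to show that $J_1(N)(\Q)$ has rank $0$ for each $N \leq 55$ with $N \notin \{37, 43, 53, 54\}$.

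First, I would restrict the set $S_0 - \{63, 80, 95, 104, 105, 126, 144\}$ from Theorem \ref{Prop-J-ranks}(2) to integers $N \leq 55$; this yields precisely
\[
\{1, 2, \ldots, 36\} \cup \{38, 39, 40, 41, 42\} \cup \{44, 45, \ldots, 52\} \cup \{54, 55\},
\]
i.e., all $N \leq 55$ except $N \in \{37, 43, 53\}$. Thus Theorem \ref{Prop-J-ranks}(2) tells us that $J_1(N)(\Q)$ has rank $0$ for exactly the $N$ in the hypothesis of the corollary (together with $N = 54$, which is excluded for a different reason).

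Next, for each such $N$, rank $0$ forces $J_1(N)(\Q) = J_1(N)(\Q)_{\tors}$. Applying Theorem \ref{thm:cusp_generate} (whose hypothesis $N \leq 55$, $N \neq 54$ is satisfied) gives
\[
\ClCusp_\Q X_1(N) \;=\; J_1(N)(\Q)_{\tors} \;=\; J_1(N)(\Q),
\]
which is the desired conclusion.

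There is no real obstacle here; the statement is a direct corollary in the literal sense, and the only content is the bookkeeping of which $N \leq 55$ survive both the exclusion list of Theorem \ref{Prop-J-ranks}(2) and the exclusion of $N = 54$ from Theorem \ref{thm:cusp_generate}. The verification that $\{37, 43, 53\}$ are exactly the $N \leq 55$ for which $J_1(N)$ has positive rank is a finite check already packaged into Theorem \ref{Prop-J-ranks}.
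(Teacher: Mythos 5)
Your proposal is correct and follows exactly the paper's argument: Theorem \ref{Prop-J-ranks}(2) shows that the only $N \leq 55$ with $J_1(N)(\Q)$ of positive rank are $37, 43, 53$ (the extra exclusions in that theorem all exceed $55$), so for the remaining $N$ rank zero gives $J_1(N)(\Q) = J_1(N)(\Q)_{\tors}$, and Theorem \ref{thm:cusp_generate} finishes the proof. The bookkeeping of which $N$ survive is exactly the content of the paper's one-line deduction.
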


Based on the results of Theorem \ref{thm:cusp_generate}, we make the following conjecture, which is a generalization of the conjecture of Conrad--Edixhoven--Stein \cite[Conjecture 6.2.2]{ConradES:J-connected-fibers}. 

\begin{conjecture}\label{conj:torsion}
For any integer $N$, we have that 
\[
\ClCusp_\Q X_1(N)= J_1(N)(\Q)_{\tors}. 
\]
\end{conjecture}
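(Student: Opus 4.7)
The plan is to combine the two bounds used throughout Section \ref{sec:computing-torsion}: the Hecke upper bound on $J_1(N)(\Q)_{\tors}$ produced by the Eichler--Shimura relation together with complex conjugation, and the cuspidal lower bound made explicit via modular units as in Subsection \ref{subsec:modularunits}. For any fixed $N$, the proof strategy of Theorem \ref{thm:cusp_generate} already gives an effective algorithmic procedure: compute $M=M_H$ via \eqref{eqn:MH}, compute $\ClCusp_\Q X_1(N)$ from modular symbols together with the diamond-operator action, and apply Lemma \ref{lem:equality_test} after checking that $(M/C)[\mathfrak{m}] = (M'/C)[\mathfrak{m}]$ for each maximal ideal $\mathfrak{m}$ of $\mathbf{T}$. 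The difficulty is to upgrade this $N$-by-$N$ procedure into a uniform statement valid for every $N$, i.e., to unconditionally establish $M_H^{G}\subseteq \ClCusp_\Q X_1(N)$.

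To pursue this uniformly, I would follow the Eisenstein ideal program of Mazur, generalized to $J_1(N)$ at composite level. Let $I\subset \mathbf{T}$ be the annihilator of $\Cuspo X_1(N)$; the Eichler--Shimura relation essentially shows that $J_1(N)(\Q)_{\tors}$ is annihilated by $I$, so it suffices to show that for each maximal ideal $\mathfrak{m}\supset I$ of residue characteristic $\ell$, the $\mathfrak{m}$-primary part of $J_1(N)(\overline{\Q})_{\tors}$ is cuspidal. For $\ell$ coprime to $6N$ this should reduce to a Galois-theoretic analysis of the residual representation $J_1(N)[\mathfrak{m}]$, which at an Eisenstein prime is an extension of the trivial character by the cyclotomic character; a Ribet-style rigidity argument then forces such extensions inside $J_1(N)$ to factor through the cuspidal subgroup. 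To descend from $\overline{\Q}$-torsion to $\Q$-torsion one invokes the explicit description of $(\Cuspo X_1(N))^{G}$ via the diamond operators established in Proposition \ref{prop:surjectivity}, and one inducts on $N$ to handle oldform contributions arising from the pullbacks of $J_1(M)$ for $M\mid N$.

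The main obstacle will be the small or exceptional primes, namely $\ell=2$, $\ell\mid N$, and Eisenstein primes in the oldform part. For $\ell\mid N$ the reduction map on $\ell$-power torsion can fail to be injective, N\'eron component groups at $\ell$ intervene, and integral $p$-adic Hodge-theoretic input or Edixhoven-style stable model calculations appear necessary. The anomalous case $N=54$ of Theorem \ref{thm:cusp_generate}, where only a multiplicative bound of $3$ can be deduced, is a concrete witness to this difficulty and is expected to come from a $3$-primary discrepancy in the oldform pullback from $J_1(27)$. A genuinely uniform proof therefore seems out of reach with current techniques, but the combined Hecke-bound and modular-unit machinery gives, as demonstrated in this section, a verification method for any individual $N$ up to a small, explicit index that can then be handled by hand.
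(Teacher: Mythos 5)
The statement you set out to prove is Conjecture \ref{conj:torsion}, which the paper itself does not prove: it is offered as a conjecture, supported only by the finite verification in Theorem \ref{thm:cusp_generate} (all $N\leq 55$ with $N\neq 54$, plus an index bound dividing $3$ for $N=54$) and Proposition \ref{prop:cusp_generate_2N}. Your first paragraph accurately reconstructs that finite, per-$N$ verification --- the Hecke bound $M_H$ from \eqref{eqn:MH}, the cuspidal lower bound $\ClCusp_\Q X_1(N)$ computed via modular symbols and modular units, and the comparison through Lemma \ref{lem:equality_test} and Proposition \ref{prop:surjectivity} --- but this is exactly what the paper already does, and it only ever treats one $N$ at a time; it says nothing about the general statement.

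The rest of your proposal is a research program rather than a proof, and you concede as much (``a genuinely uniform proof therefore seems out of reach with current techniques''). The concrete gaps are these. The central step --- that at an Eisenstein maximal ideal $\mathfrak{m}$ the residual representation $J_1(N)[\mathfrak{m}]$ is an extension of the trivial character by the cyclotomic character, and that a ``Ribet-style rigidity argument'' forces all such extensions inside $J_1(N)$ to lie in the cuspidal subgroup --- is precisely the hard content of Mazur's Eisenstein ideal theory, established at prime level and extended only to special families of composite $N$ in the literature cited in Subsection \ref{ssec:cuspidal-torsion}; you assert it for arbitrary $N$ without argument. You likewise leave open $\ell=2$, the primes $\ell\mid N$ (where reduction of $\ell$-power torsion need not be injective and N\'eron component groups intervene), and the oldform induction, which is not even set up; the unresolved $3$-primary ambiguity at $N=54$ in Theorem \ref{thm:cusp_generate} shows these are genuine obstructions rather than technicalities. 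So while your sketch is a sensible description of how one might attack Conjecture \ref{conj:torsion}, it contains no complete argument for any step beyond the computational verification the paper already carries out, and the conjecture remains open.
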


Using the same proof as in Theorem \ref{thm:cusp_generate}, we can prove the following. 

\begin{proposition}\label{prop:cusp_generate_2N}
Let $N \leq 16$ be an integer. Then
\[\ClCusp_\Q X_1(2,2N)= J_1(2,2N)(\Q)_{\tors}.\]
\end{proposition}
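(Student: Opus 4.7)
The plan is to follow the proof template of Theorem \ref{thm:cusp_generate} line by line with $X_1(N)$ replaced by $X_1(2,2N)$, exploiting the isomorphism $X_1(2,2N) \cong X_\Delta(4N)$ of \cite{JeonK:bielliptic-modular-curves} with $\Delta = \{\pm 1, \pm(2N+1)\}$ so that all modular-symbol machinery already written for intermediate modular curves $X_H(4N)$ applies directly.

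First, I would establish the analog of Proposition \ref{prop:surjectivity}: for each $N \leq 16$, verify that the natural map
\[
(\Cuspo X_1(2,2N))^G \twoheadrightarrow (\ClCusp X_1(2,2N))^G
\]
is surjective, so that $\ClCusp_\Q X_1(2,2N) = (\ClCusp X_1(2,2N))^G$. Translated through the $X_\Delta(4N)$ model, cusps, diamond operators, and the $G$-action are all computable via the \texttt{CuspidalClassgroup} / \texttt{MiscellaneousFunctions} modules from \cite{derickx:X1Ncode}, and the surjectivity reduces to a linear-algebra check over $\Z$.

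Second, for each $N \leq 16$ compute the Hecke bound $M$ from Equation \eqref{eqn:MH} applied to $J_1(2,2N)$, using enough auxiliary primes $q_i \nmid 4N$ to stabilize the intersection. For those $N$ where $M \subseteq \ClCusp X_1(2,2N)$ holds, the sandwich
\[
(\ClCusp X_1(2,2N))^G \subseteq J_1(2,2N)(\Q)_{\tors} \subseteq M^G \subseteq (\ClCusp X_1(2,2N))^G
\]
combined with the first step yields equality on the nose. For any residual $N$ where $M \not\subseteq \ClCusp X_1(2,2N)$, set $M' := M \cap \ClCusp X_1(2,2N)$, verify the $p$-local equality $(M/\ClCusp_\Q X_1(2,2N))[p] = (M'/\ClCusp_\Q X_1(2,2N))[p]$ for every prime $p \mid \#M$, and conclude by Lemma \ref{lem:equality_test} with $C = \ClCusp_\Q X_1(2,2N)$ and $T = J_1(2,2N)(\Q)_{\tors}$.

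The main obstacle is computational rather than conceptual. The dimensions of $J_1(2,2N)$ grow quickly (Example \ref{exam:Hecke218} already deals with the $7$-dimensional $J_1(2,18)$, and the $N = 14, 15, 16$ cases push the dimensions substantially higher), so the modular-symbol linear algebra becomes heavy; more delicately, one must correctly identify the $G$-action on cusps by tracking $\Gal(\overline{\mQ}/\mQ)$-orbits through the $\Delta$-quotient of $X_1(4N)$ rather than on $X_1(4N)$ directly. Once the existing code is adapted to the $\Delta$-level, the verification is a finite mechanical check, and the absence of exceptional $N$ in the statement of Proposition \ref{prop:cusp_generate_2N} suggests that in practice the simpler sandwich step alone suffices throughout $N \leq 16$.
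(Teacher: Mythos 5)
Your proposal is correct and matches the paper's proof, which simply runs the argument of Theorem~\ref{thm:cusp_generate} (surjectivity of the cuspidal map, the Hecke bound $M$ from Equation~\eqref{eqn:MH}, the sandwich, and Lemma~\ref{lem:equality_test} as a fallback) on $J_1(2,2N)$ via the same modular-symbol code, using the realization of $X_1(2,2N)$ as an intermediate curve of level $4N$ exactly as you describe. The only difference is presentational: the paper compresses all of this into ``using the same proof as in Theorem~\ref{thm:cusp_generate}'' together with the accompanying \texttt{Sage} computation.
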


Again see the \texttt{Sage} file \texttt{torsionComputations.py}.


\subsection{Local to global failures}
\label{ssec:local-to-global-failures}
For $J_0(N)$, the local methods described above often give only an upper bound instead of the true torsion.
With enough work one can sometimes fix this; see Example~\ref{exam:torsionJ030} below. For another compelling example
see \cite[Subsection 5.5]{ozman:quadraticpoints}  --- the curve $X_0(45)$ is a plane quartic, and they use the explicit description of $J_0(45)(\Qbar)[2]$ via bitangents to bridge the discrepancy between their local bounds and the true torsion.

\begin{example}[Torsion on $J_0(30)$]\label{exam:torsionJ030}
 Consider the rational torsion on $A = J_0(30)$, which is a $3$-dimensional modular abelian variety. The rational cuspidal divisors generate a subgroup with invariants $[2, 4, 24]$. Locally, $A(\mF_{7})$ has invariants $[2, 2, 4, 48]$ and $A(\mF_{23})$ has invariants $[2, 12, 24, 24]$; these groups have GCD $[2, 2, 4, 24]$. 
 The Hecke bound and the additional argument from Theorem \ref{thm:cusp_generate} do not improve this.

 The curve $X_0(30)$ is hyperelliptic and admits the model $y^2 = f(x)$, where
 \[ f(x) = 
 (x^2 + 3x + 1)\cdot
 (x^2 + 6x + 4)\cdot
 (x^4 + 5x^3 + 11x^2 + 10x + 4),
 \]
 and we can exploit the explicit description of the 2-torsion of the Jacobian of a hyperelliptic curve.
 The geometric 2-torsion $J_0(30)(\Qbar)[2]$ is Galois-equivariantly in bijection with even order subsets of the set of Weierstrass points modulo the subset of all Weierstrass points (see e.g., \cite[Section 6]{gross:Hanoi-lectures-on-the-arithmetic-of-hyperelliptic-curves}), and one can compute the rational torsion by taking Galois invariants. See the file \texttt{functions.m} for a short routine \texttt{twoTorsionRank} which computes $J(\Q)[2]$ for the Jacobian of a hyperelliptic curve. Using this routine, we find that $\rank_{\mF_2} J_0(30)(\Q)[2] = 3$, and hence the rational torsion on $J_0(30)$ is cuspidal and isomorphic to $[2,4,24]$. 

Note that if $J$ is the Jacobian of a hyperelliptic curve defined by an odd degree polynomial $f(x)$, $\rank_{\F_2}J(\Q)[2] = i$, where $i+1$ is the number of factors $f(x)$; this is no longer true in general, as $J_0(30)$ demonstrates. \\

 See the \texttt{Magma} file \texttt{master-30.m} for more details.
\end{example}

\subsection{Tables of cuspidal torsion subgroups for $J_1(N)$ and $J_1(2,2N)$}
\label{ssec:local-to-global-failures}
We conclude this section by giving the structure of $\ClCusp_\Q X_1(N)$ for $10 \leq N \leq 55$ and of $\ClCusp_\Q X_1(2,2N)$ for $5\leq N \leq 16$ in terms of its invariant factor decomposition in Tables \ref{table:cuspidaltorsion} and \ref{table:cuspidaltorsion22N}. 
For code, see the \texttt{Sage} file \texttt{torsionComputations.py}. 

\begin{remark}
In \cite{yang:modularunits}, Yang determines the cuspidal torsion on $J_1(N)$ generated by $\infty$-cusps (cf.~\cite[Definition 5.4]{derickx2017small}). 
By comparing his results with Table \ref{table:cuspidaltorsion}, we can find $N$ for which one needs non-$\infty$ cusps and/or non-rational cusps to generate the torsion on $J_1(N)(\Q)$. 
For example, by \cite[Table 1]{yang:modularunits}, the $\infty$-cusps on $X_1(20)$ generate a subgroup isomorphic to $[20]$, but by Theorem \ref{thm:cusp_generate} and Table \ref{table:cuspidaltorsion}, we see that $J_1(20)(\Q)$ is cuspidal and isomorphic to $[60]$; in particular, the $\infty$-cusps do not generate all of the rational torsion on $J_1(20)$. 
A similar situation occurs for $X_1(21)$, which we discuss in Subsection \ref{sec:21}. 
\end{remark}

\begin{center}
\begin{table}[h!]
\def\arraystretch{1.3}
\centering
\resizebox{\textwidth}{!}
{\begin{tabular}{|c|c||| c|c ||| c| c |}
\hline
$N$ & $ \ClCusp_\Q X_1(N)$ & $N$ & $ \ClCusp_\Q X_1(N)$ & $N$ & $ \ClCusp_\Q X_1(N)$ \\
\hline \hline
	11 & [5]			& 27 & [3,3,52497]	& 42 & [182,1092,131040] \\					
	13 & [19]		&	28 & [2,4,12,936]&	43 & [2,1563552532984879906]\\					
	14 & [6]			&	29 & [4,4,64427244] &	44 & [4,620,3100,6575100]\\					
	15 & [4]			&30 & [4,8160]	&45 & [3,9,36,16592750496]\\					
	16 & [2,10]		& 	31 & [10,1772833370]	&46 & [408991,546949390174] \\					
	17 & [584]		&	32 & [2,2,2,4,120,11640]&	47 & [3279937688802933030787]\\					
	18 & [21]		&	33 & [5,42373650]	& 48 & [2,2,2,2,4,40,40,240,1436640]\\					
	19 & [4383]	&34 & [8760,595680]	&49 & [7,52367710906884085342]\\					
	20 & [60]		&35 & [13,109148520]&	50 & [5,1137775,47721696825]\\					
	21 & [364]		&	36 & [12,252,7812]	&51 & [8,1168,7211322610146240]\\					
	22 & [5,775]	&	37 & [160516686697605]	 & 52 & [4,28,532,7980,17470957140]\\					
	23 & [408991]&	38 & [9,4383,33595695]	&53 & [182427302879183759829891277]\\					
	24 & [2,2,120] &39 & [7,31122,3236688]	&\mbox{$54'$\hspace{-2.7pt}}& [3,3,3,9,9,1102437,1529080119]\\			
	25 & [227555]&40 & [2,2,2,8,120,895440]	&55 & [5,550,8972396739917886000]\\					
	26 & [133,1995]	& 41 & [107768799408099440] & {} & {} \\	
\hline				
\end{tabular}}
\vspace{3pt}
\caption{Cuspidal torsion in $J_1(N)(\Q)$ for $N = 11$ and $ 13 \leq N\leq 55$ (all torsion if $N \neq 54$).}
\label{table:cuspidaltorsion}
\end{table}
\end{center}


\begin{center}
\begin{table}[h!]
\def\arraystretch{1.1}
\centering
\resizebox{\textwidth}{!}
{\begin{tabular}{|c|c||| c|c ||| c| c |}
\hline
$(2,2N)$ & $ \ClCusp_\Q X_1(2,2N)$ & $(2,2N)$ & $ \ClCusp_\Q X_1(2,2N)$ & $(2,2N)$ & $ \ClCusp_\Q X_1(2,2N)$ \\
\hline \hline
$(2,10)$ &  [6]		& $(2,18)$ & [2,42,126]		& $(2,26)$	& 	 [2,14,266,3990,11970]	\\
$(2,12)$ &  [4]		& $(2,20)$ & [4,60,120]		& $(2,28)$	& 	 [2,4,4,4,8,24,936,936]	\\
$(2,14)$ &  [2,2,6,18]& $(2,22)$ & [2,10,1550,4650]	& $(2,30)$	&	 [2,2,2,2,2,24,8160,8160]	\\
$(2,16)$ & [2,20,20]	& $(2,24)$ & [2,4,4,120,240]	& $(2,32)$ &	 [2,2,2,4,4,24,120,23280,23280]	\\
\hline
\end{tabular}}
\vspace{3pt}
\caption{(Cuspidal) torsion on $J_1(2,2N)(\Q)$ for $5 \leq N\leq 16$.}
\label{table:cuspidaltorsion22N}
\end{table}
\end{center}

\section{Methods for determining cubic points on curves}
\label{sec:methods}
In this section, we describe the variety of methods we utilize to determine the cubic points on the modular curves $X_1(N)$.

\subsection{Local methods}\label{subsec:local}
In some cases, we can deduce that there are no non-cuspidal cubic points on $X_1(N)$ simply by determining points on $X_1(N)(\mF_{p^i})$ for $i = 1,2,$ and $3$ for some prime $p\nmid 2N$.

Suppose that $X$ is a curve of gonality at least 4 and at least one rational point, and that its Jacobian $J_X$ satisfies rank $J(\Q)$ = 0. Fix a point $\infty \in X(\Q)$. Since $X$ has gonality at least 4, the Abel--Jacobi maps
\[
f_{i,i\infty}\colon X^{(i)} \hookrightarrow J_X, \,D \mapsto D - i\infty
\]
are injective for $i = 1,2,3$. Moreover, since the rank of $J_X(\mathbb{Q})$ is zero, the reduction map $J_X(\mathbb{Q}) \hookrightarrow J_X(\mathbb{F}_p)$ is injective for $p > 2$ (see Remark \ref{ssec:local-torsion-bounds}). We thus get a commutative diagram
\[
\xymatrix{
X^{(i)}(\Q) \ar@{^{(}->}[r] \ar@{^{(}->}[d] & J_X (\Q) \ar@{^{(}->}[d] \\
X^{(i)}(\F_p) \ar@{^{(}->}[r] & J_X (\F_p)
}
\]
of injections. In particular, the reduction maps $X^{(i)}(\Q) \hookrightarrow X ^{(i)} (\F_p)$ are injective for $i = 1,2,3$; if there is a prime $p$ such that these maps are also surjective, then we have determined $X^{(i)}(\Q)$. One can verify surjectivity by checking cardinalities, i.e., checking if
\[
\#X^{(i)}(\Q) \geq \#X ^{(i)} (\F_p).
\]
In practice, $\#X ^{(i)} (\F_p)$ can be determined very quickly in \texttt{Magma} (even using a singular model, by working with places instead of points), and we often a priori have a lower bound on $\#X^{(i)}(\Q)$ coming from an explicit description of cusps (e.g., Lemma \ref{lemma:cuspidal-subscheme}).
 \\

See Subsection \ref{sec:22} for examples.

\subsection{Direct analysis of preimages of an Abel--Jacobi map}
\label{subsec:direct}
When $X_1(N)$ has gonality at least 4 and $J_1(N)(\mQ)$ has rank 0, one can compute the finitely many preimages of an Abel--Jacobi map $\iota\colon X_1(N)^{(3)}(\Q) \to J_1(N)(\Q)$. For values of $N$ such that the genus of $X_1(N)$ and the size of $J_1(N)(\Q)$ is not too large, it is possible to do this directly over $\Q$: fixing a base point $\infty \in X_1(N)(\Q)$, a divisor $D \in J_1(N)(\Q)$ is in the image of the Abel--Jacobi map $E \mapsto E - 3\infty$ if and only if the linear system $|D + 3 \infty| \neq \emptyset$. One can compute $|D + 3 \infty|$ via \texttt{Magma}'s \verb+RiemannRoch+ intrinsic. If $|D + 3 \infty| = \emptyset$ then we disregard it; otherwise it will contain a single effective divisor $E$ of degree 3. Thus as $D$ ranges over $J_1(N)(\Q)$, we eventually compute all of the effective degree 3 divisors (and hence the image of Abel--Jacobi). We will refer to this as \textsf{direct analysis over $\Q$}.

Direct analysis over $\Q$ can be very slow (cf.~Remark \ref{rem:timing}). A much faster method from \cite[Footnote~1]{vanHoeij:LowDegreePlaces}
works as follows. The diagram from Subsection \ref{subsec:local}
\[
 \xymatrix{
 X ^{(3)} (\Q) \ar@{^{(}->}[r]^{\iota}\ar@{^{(}->}[d]^{\red_X}& \ar@{^{(}->}[d]^{\red_J} J_X(\Q) \\
 X ^{(3)} (\F_p) \ar@{^{(}->}[r]^{\iota_p}& J_X(\F_p)
 }
 \]
 commutes, so the image of $\iota_p$ contains the reduction of the image of $\iota$. It thus suffices to:
 \begin{enumerate}
 \item compute the image of $\iota_p$ (which is generally very fast),
 \item compute the preimage of $\im \iota_p$ under $\red_J$ (also fast), and
 \item compute which elements of $\red_J^{-1}\left(\im \iota_p\right)$ are in the image of $\iota$.
 \end{enumerate}
We will refer to this approach as \textsf{direct analysis over $\F_p$}.
The set $\red_J^{-1}\left(\im \iota_p\right)$ of divisors which are ``locally in the image of Abel--Jacobi'' is generally much smaller than $J_X(\Q)$, so step (3) is much faster. (Equivalently, since each map is injective, one can compute the intersection $\left(\im \red_J\right) \cap \left(\im \iota_p\right).$)
One could further speed up the computation by repeating this procedure at several primes; see the Mordell--Weil sieve \cite[Section 6]{ozman:quadraticpoints}. Implementing this was unnecessary for our results. \\

See Subsections \ref{sec:21} and \ref{sec:216} for examples.

\subsection{Direct analysis on non-trigonal curve quotients}
\label{subsec:directanalysisquotient}
While direct analysis works in principle, we encounter many values of $N$ where the genus of $X_1(N)$ and the size of $J_1(N)$ are large (for example, $g(X_1(45)) = 41$ and $16128153482112 \mid \#J_1(45)(\Q)$), and hence working directly with $X_1(N)^{(3)}(\Q)$ and $J_1(N)(\Q)$ is hard. 
Instead, we consider a morphism $X_1(N)\to X$ where $X$ is non-trigonal and perform the direct analysis on $X$. For details on how to find a model for $X$, see Remark \ref{rem:modelsmodular}.

\begin{remark}[Cubic points on hyperelliptic curves]\label{rem:cuspidalgonality}
A curve $X$ of genus $g\leq 2$ is trigonal as it admits a base-point free $g^1_3$ by Riemann--Roch, and a non-hyperelliptic curve of genus $g = 3$ or $ 4$ is trigonal. However, a hyperelliptic curve of $g\geq 3$ is not trigonal, and thus has finitely many cubic points. In this case, $X^{(3)}(\Q)$ is still infinite; it contains the image of $X(\Q) \times X^{(2)}(\Q)$ (i.e., divisors of the form $P + Q + Q^{\iota}$, where $P \in X(\Q)$ and where $\iota$ is the hyperelliptic involution), and the complement of this subset contains the finitely many cubic points.
Therefore, when we search for non-trigonal curve quotients, we either need a genus $g\geq 3$ hyperelliptic curve or certain non-hyperelliptic curves of $g\geq 5$.

In the hyperelliptic case, we need to know that the image of a cubic point does not reduce to a non-cuspidal rational point plus another divisor.
We encounter four genus 3 hyperelliptic cases, namely $X_0(N)$ for $N = 30,33,35,$ and $39$.
By \cite[Theorem 1]{kenkuIsomClasses}, we know that the rational points on the curves $X_0(N)$ for these $N$ are all cuspidal.
\end{remark}

See Subsection \ref{sec:30} for examples.

\subsection{Formal immersion criteria}
\label{ss:form-immers-crit}
When the rank of $J_1(N)(\mQ)$ is positive, we apply formal immersion criteria of \cite{derickx2017small} to determine all of the points on $X_1(N)^{(3)}(\mQ)$.
The underlying ideas of using formal immersion in the study of rational and higher degree points on modular curves comes from the foundational works of Mazur \cite{mazur1978primedegreeisogeny} and Kamienny \cite{kamienny:torsionpointshigherdegree}. 

To begin, we define formal immersions. Throughout this subsection, let $R$ be a discrete valuation ring with perfect residue field $\kappa$ and fraction field $K$.

\begin{definition}
Let $\phi\colon X \to Y$ be a morphism of Noetherian schemes and $x\in X$ a point and $y = f(x) \in Y$.
Then $\phi$ is a \textsf{formal immersion at $x$} if the induced morphism of complete local rings $\widehat{\phi}^*\colon \widehat{\mathcal{O}_{Y,y}} \to \widehat{\mathcal{O}_{X,x}}$ is surjective.
\end{definition}

The main reason we consider formal immersions is the following lemma.

\begin{lemma}[\protect{\cite[Lemma 2.2]{derickx2017small}}]\label{lem:formalimmersion}
Let $X,Y$ be Noetherian schemes. Let $R$ be a Noetherian local ring with maximal ideal $\mathfrak{m}$ and residue field $\kappa = R/\mathfrak{m}$.
Suppose that $f\colon X\to Y$ is a formal immersion at a point $x\in X(\kappa)$ and suppose that $P,Q \in X(R)$ are two points such that $x = P_{\kappa} = Q_{\kappa}$ and $f(P) = f(Q)$. Then $P=Q$.
\end{lemma}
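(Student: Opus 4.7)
The plan is to translate the geometric equality $P = Q$ into a statement about local ring homomorphisms and then invoke the surjectivity encoded in the formal immersion hypothesis. The first step is to interpret the hypotheses algebraically. Since $P, Q \in X(R)$ both reduce to the $\kappa$-point $x$ at the closed point of $\Spec R$, they factor through $\Spec \mathcal{O}_{X,x}$, i.e.\ they correspond to local ring homomorphisms $P^{\ast}, Q^{\ast} \colon \mathcal{O}_{X,x} \to R$. Likewise, the morphism $f \circ P = f \circ Q \colon \Spec R \to Y$ reduces to $y = f(x)$ and so corresponds to a local homomorphism $\varphi^{\ast} \colon \mathcal{O}_{Y,y} \to R$, which satisfies
\[
\varphi^{\ast} \;=\; P^{\ast} \circ f^{\ast} \;=\; Q^{\ast} \circ f^{\ast},
\]
where $f^{\ast} \colon \mathcal{O}_{Y,y} \to \mathcal{O}_{X,x}$ is the stalk map of $f$.

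Next I would pass to completions. Because the maps $P^{\ast}, Q^{\ast}$ are local, composing with the natural map $R \to \widehat{R}$ (the $\mathfrak{m}$-adic completion) gives continuous local maps into a complete ring, and the universal property of completion extends them uniquely to continuous maps $\widehat{P}^{\ast}, \widehat{Q}^{\ast} \colon \widehat{\mathcal{O}_{X,x}} \to \widehat{R}$. Applying the same procedure on $Y$, the identity $P^{\ast} \circ f^{\ast} = Q^{\ast} \circ f^{\ast}$ yields, after completion,
\[
\widehat{P}^{\ast} \circ \widehat{f}^{\ast} \;=\; \widehat{Q}^{\ast} \circ \widehat{f}^{\ast} \colon \widehat{\mathcal{O}_{Y,y}} \longrightarrow \widehat{R}.
\]

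Now the formal immersion hypothesis enters: $\widehat{f}^{\ast}$ is surjective, so it is an epimorphism of rings and we may cancel it on the right, obtaining $\widehat{P}^{\ast} = \widehat{Q}^{\ast}$. Restricting along $\mathcal{O}_{X,x} \to \widehat{\mathcal{O}_{X,x}}$, this says that $P^{\ast}$ and $Q^{\ast}$ agree once composed with $R \hookrightarrow \widehat{R}$. Since $R$ is Noetherian local, Krull's intersection theorem makes $R \to \widehat{R}$ injective, and so $P^{\ast} = Q^{\ast}$ as maps to $R$. This gives $P = Q$ in $X(R)$, as desired.

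The whole argument is essentially a formal manipulation, so there is no serious obstacle; the one place requiring care is checking that the universal property of completion genuinely produces the factorizations $\widehat{P}^{\ast}, \widehat{Q}^{\ast}$ and that $\widehat{f}^{\ast}$ can be canceled on the right — both are standard, but they are the substantive ingredients that make the bookkeeping go through.
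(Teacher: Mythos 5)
Your proof is correct and is essentially the standard argument for this lemma: the paper itself only cites \cite[Lemma 2.2]{derickx2017small}, and the proof given there proceeds exactly as you do, by factoring $P,Q$ through local homomorphisms $\mathcal{O}_{X,x}\to R$, passing to completions, cancelling the surjection $\widehat{f}^{*}$, and using the injectivity of $R\to\widehat{R}$ (Krull intersection) to descend the equality. No gaps; the points you flag (the universal property of completion and right-cancellation of a surjection) are handled correctly.
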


To verify that a morphism is a formal immersion at a point, we will use the following criterion.

\begin{proposition}[\protect{\cite[Proposition 3.7]{derickx2017small}}]\label{prop:generalKamienny}
Let $C$ be a smooth projective curve over $R$ with geometrically connected generic fiber and Jacobian $J$.
Let $y \in C^{(d)}(\kappa)$ be a point and write $y = \sum_{j=1}^m n_jy_j$ with $y_j \in C^{(d_j)}(\overline{\kappa})$ distinct and $m,n_1,\dots,n_m \in \mathbb{N}$.
Let $t\colon J\to A$ be a map of abelian schemes over $R$ such that $t(J^1(R)) = \{ 0\}$, where $J^1(R)$ denotes the kernel of the reduction map $J(R)\to J(\kappa)$.
Let $q_j$ be a uniformizer at $y_j$, $e$ be a positive integer and $\omega_1,\dots,\omega_e \in t^*(\Cot_0 A_{\overline{\kappa}}) \subset \Cot_0(J_{\overline{\kappa}})$.
For $1\leq i \leq e$ and $1\leq j \leq m$, let $a(\omega_i,q_j,n_j) := (a_1(\omega_i),\dots,a_{n_j}(\omega_i))$ be the row vector of the first $n_j$ coefficients of $\omega_i$'s $q_j$-expansion.

Then $t\circ f_{d,y}\colon C_{\kappa}^{(d)} \to A_{\kappa}$ is a formal immersion at $y$ if the matrix
\begin{equation}
 \label{eq:formal-immersion-matrix}
 A := \begin{pmatrix}
a(\omega_1,q_1,n_1) & a(\omega_1,q_2,n_2) & \cdots & a(\omega_1,q_1,n_m) \\
a(\omega_2,q_1,n_1) & a(\omega_2,q_2,n_2) & \cdots & a(\omega_2,q_1,n_m) \\
\vdots & \vdots & \ddots & \vdots \\
a(\omega_e,q_1,n_1) & a(\omega_e,q_2,n_2) & \cdots & a(\omega_e,q_1,n_m) \\
\end{pmatrix}
\end{equation}
has rank $d$. If $\omega_1,\dots,\omega_e$ generate $t^*\Cot_{0}(A_{\overline{\kappa}})$, then the previous statement is an equivalence.
\end{proposition}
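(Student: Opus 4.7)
Since $C/R$ is smooth, $C^{(d)}_{\kappa}$ is smooth of dimension $d$ over $\kappa$, and $A_{\kappa}$ is smooth as well. For a morphism of Noetherian schemes, being a formal immersion at a point is equivalent to surjectivity of the induced pullback on Zariski cotangent spaces (completeness plus Nakayama reduces surjectivity of $\widehat{\phi}^*$ to surjectivity on $\mathfrak{m}/\mathfrak{m}^2$). Smoothness then tells us that $\Cot_y C^{(d)}_{\overline{\kappa}}$ has dimension exactly $d$, so the plan is to show that the image of $(t\circ f_{d,y})^*\colon \Cot_0 A_{\overline{\kappa}} \to \Cot_y C^{(d)}_{\overline{\kappa}}$ has dimension $d$ precisely under the rank condition on the matrix $A$ in the statement.

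Factor $(t\circ f_{d,y})^* = f_{d,y}^* \circ t^*$, so that the image of $(t \circ f_{d,y})^*$ equals $f_{d,y}^*(t^*\Cot_0 A_{\overline{\kappa}})$. By hypothesis the $\omega_i$ lie in $t^*\Cot_0 A_{\overline{\kappa}}$, so $f_{d,y}^*\langle \omega_1,\dots,\omega_e\rangle \subseteq \mathrm{Im}(t\circ f_{d,y})^*$, with equality whenever the $\omega_i$ span $t^*\Cot_0 A_{\overline{\kappa}}$. It therefore remains to identify $f_{d,y}^*\langle \omega_1,\dots,\omega_e\rangle$ with the row span of the matrix $A$. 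Under the standard identification $\Cot_0 J_{\overline{\kappa}} = H^0(C_{\overline{\kappa}}, \Omega^1)$, and using that the $y_j$ are distinct so $C^{(d)}$ is étale-locally at $y$ isomorphic to $\prod_j C^{(n_j)}$ at $(n_j y_j)_j$, one obtains the block decomposition $\Cot_y C^{(d)}_{\overline{\kappa}} = \bigoplus_j \Cot_{n_j y_j} C^{(n_j)}_{\overline{\kappa}}$, with each summand of dimension $n_j$. In a basis of the $j$-th summand adapted to the uniformizer $q_j$, the heart of the argument is to check that the $j$-th block of $f_{d,y}^*(\omega_i)$ is precisely the vector $a(\omega_i, q_j, n_j)$ of the first $n_j$ $q_j$-expansion coefficients of $\omega_i$. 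Granting this, the $i$-th row of $A$ is $f_{d,y}^*(\omega_i)$, so $f_{d,y}^*\langle \omega_1,\dots,\omega_e\rangle$ has dimension $\mathrm{rank}\, A$, and the forward implication (resp.\ equivalence under generation) follows immediately.

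The main obstacle is the local cotangent computation. The cleanest route is to factor $f_{d,y}$ through $\mathrm{Pic}^d(C)$ and compute $df_{d,y}$ at $D = \sum n_j y_j$ via the connecting morphism of the short exact sequence $0 \to \mathcal{O}_C \to \mathcal{O}_C(D) \to \mathcal{O}_D(D) \to 0$; dually, $f_{d,y}^*$ is identified with the restriction $H^0(C, \Omega^1) \to H^0(D, \Omega^1|_D)$, i.e., the ``truncated jet at $D$'' operation. The nontrivial bookkeeping is to normalize the basis of $\Cot_{n_j y_j} C^{(n_j)}_{\overline{\kappa}}$ -- dual to the elementary symmetric functions in $q_j$, or equivalently dual to the power sums via Newton's identities -- so that the jet of $\omega = \sum_k a_k(\omega) q_j^{k-1}\, dq_j$ reads exactly as $(a_1(\omega), \dots, a_{n_j}(\omega))$. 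This is a classical but slightly delicate normalization involving symmetric functions and Serre duality conventions; once in place the rest of the argument is formal linear algebra, with the hypothesis $t(J^1(R)) = \{0\}$ playing no role here (it is used downstream when one applies the formal immersion via Lemma~\ref{lem:formalimmersion}).
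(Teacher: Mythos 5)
This proposition is not proved in the paper at all -- it is quoted verbatim from \cite[Proposition 3.7]{derickx2017small} -- and your plan is essentially the argument given in that reference: formal immersion at $y$ reduces (via completeness, Nakayama, and the fact that $y$ and its image are $\kappa$-points, so the residue field map is an isomorphism) to surjectivity of the cotangent map, the codifferential of the Abel--Jacobi map is the jet/restriction map $H^0(C,\Omega^1)\to H^0(C,\Omega^1\otimes\mathcal{O}_D)$ for $D=\sum n_j y_j$, and in the elementary-symmetric-function coordinates on $C^{(n_j)}$ at $n_jy_j$ this block is (up to a unipotent-triangular change of basis, which does not affect the rank) exactly the vector of the first $n_j$ coefficients of the $q_j$-expansion; the rank condition then gives surjectivity, with the converse exactly when the $\omega_i$ span $t^*\Cot_0 A_{\overline{\kappa}}$. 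Surjectivity of the $\kappa$-linear cotangent map may indeed be checked after base change to $\overline{\kappa}$, as you implicitly do.

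One caveat: your decomposition $\Cot_y C^{(d)}_{\overline{\kappa}}=\bigoplus_j \Cot_{n_jy_j}C^{(n_j)}_{\overline{\kappa}}$ with summands of dimension $n_j$ silently assumes each $y_j$ is a single geometric point of $C$ (i.e.\ $d_j=1$), so that $d=\sum_j n_j$. When $y_j$ is a closed point of residue degree $d_j>1$, its block contributes dimension $n_jd_j$, and to make ``rank $d$'' attainable one must either expand each Galois orbit into its $d_j$ conjugate geometric points and use all conjugate $q$-expansions as columns, or read the $\kappa(y_j)$-valued coefficients $\kappa$-linearly; this is the intended reading of the cited statement. In the applications in this paper ($N=65$ and $N=121$) the cuspidal divisors checked are supported on rational cusps, so $d_j=1$ throughout and your argument covers exactly what is used. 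You are also right that the hypothesis $t(J^1(R))=\{0\}$ is not used in the criterion itself but only downstream, via Lemma \ref{lem:formalimmersion}.
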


To apply Proposition \ref{prop:generalKamienny}, we will take $\kappa$ to have odd characteristic and $A$ to be a rank 0 abelian variety; since torsion injects under reduction, the hypothesis $t(J^1(R)) = \{0\}$ is satisfied.
\\

Our plan for $N = 65$ and $121$ is as follows.
We first show, either via a brute force search or using the Hasse bound, that a cubic point on $X_1(N)$ will reduce modulo some prime $p\nmid 2N$ to a degree 3 divisor which is supported on cusps.
Then, we know that the reduction modulo $p$ of a cubic point on $X_1(N)$ will map to a degree 3 cuspidal divisor on $X_0(N)$. 
Using \texttt{Magma}'s intrinsic \texttt{Decomposition(JZero($N$))}, we find a rank zero quotient $A$ of $J_0(N)$, and then we verify that the morphism $X_0(N)^{(3)} \to A$ is a formal immersion at these degree 3 divisors. \\

See Subsections \ref{subsec:analysis121} and \ref{subsec:analysis65} for greater detail.

\section{Cubic points on modular curves with rank 0 Jacobians}
\label{sec:cubicpoints0}
In this section, we determine the cubic points on the modular curves $X_1(N)$ from Remark \ref{remark-enumeration-of-cases} which have Jacobians $J_1(N)$ of rank 0.

For claims about gonality of $X_1(N)$, see \cite[Table 1]{derickx2014gonality}, for claims about the number and degrees of cusps on $X_1(N)$ or $X_0(N)$, see Lemma \ref{lemma:cuspidal-subscheme}, and for claims about the torsion on $J_1(N)(\Q)$ and $J_1(2,2N)(\Q)$ see Corollary \ref{coro:almostallN}, Proposition \ref{prop:cusp_generate_2N}, and Tables \ref{table:cuspidaltorsion} and \ref{table:cuspidaltorsion22N}.

\subsection{Local methods --- the cases $X_1(22)$ and $X_1(25)$.}
\label{sec:22}
The modular curve $X_1(22)$ is a genus 6 tetragonal curve with 10 rational cusps.
Using \texttt{Magma}, we determine that modulo 3, $X_1(22)$ has 10 degree 1 places, 0 degree 2 places, and 0 degree 3 places.
Since we found 10 rational points, we immediately conclude that $X_1(22)$ has no cubic points.
An identical argument handles $X_1(25)$. 
 \\

See the \texttt{Magma} files \texttt{master-22.m} and \texttt{master-25.m} for code verifying these claims.

\subsection{Direct analysis over $\mQ$ --- the case of $X_1(21)$}
\label{sec:21}
The modular curve $X_1(21)$ has genus 5 and is tetragonal.
We prove that the only $\Q$-rational points of $X_1(21)^{(3)}$ arise from combinations of the 6 rational cusps, the 2 quadratic cusps, and the 2 cubic points $D_0$ and $D'_0$ corresponding to the elliptic curve $E/\Q$ with Cremona label \href{http://www.lmfdb.org/EllipticCurve/Q/162b1}{\texttt{162b1}}, which has a point of exact order $21$ over $\Q( \zeta_9 )^+$. We follow the method of Subsection \ref{subsec:direct}.

From Example \ref{example:21-torsion}, we know that $J_1(21)(\Q)$ is cyclic of order 364.
Differences of the \textit{rational} cusps only generate the subgroup of order 364/2, but $D := D_0 - 3\infty$ has order $364$ (where $\infty$ is any rational cusp).

Let
 \[
f_{3,3\infty} \colon X_1(21)^{(3)}(\Q) \to J_1(21)(\Q), \, E \mapsto E - 3\infty
\]
be an Abel--Jacobi map. For each point $nD \in J_1(21)(\Q)$, $nD$ is in the image of $f_{3,3\infty}$ if and only if the linear system $|nD + 3 \infty|$ is nonempty, and the \texttt{Magma} intrinsic \texttt{RiemannRochSpace} will check whether this is true. Moreover, since $X_1(21)$ is tetragonal, $\dim |nD + 3 \infty| \leq 0$, so if it is nonempty, it follows that $|nD + 3 \infty| = \{E\}$ for some effective divisor $E$ of degree 3; in \texttt{Magma} one can easily compute $E$ and its support. We find that $nD$ is in the image of $f_{3,3\infty}$ if and only if
\begin{multline*}
n \in \{ 0, 1, 5, 8, 12, 14, 16, 22, 38, 40, 42, 58, 60, 64, 65, 67, 76, 84, 91, 92, 94, 101, 104,
 111, \\ 118, 121, 123, 138, 145, 147, 167, 172, 183, 188, 190, 200, 201, 202, 204, 206, 214, 226,
 228, \\ 230, 234, 241, 246, 248, 250, 252, 254, 268, 272, 274, 278, 280, 282, 284, 289, 291, 292,
 294, \\ 297, 298, 306, 308, 315, 318, 326, 328, 332, 335, 338, 352, 360, 362 \};
\end{multline*}
each of these correspond to combinations of known rational, quadratic, and cubic points, and in particular $nD$ is the image of a cubic point (under $f_{3,3\infty}$) if and only if $n = 1,183$.
\\

See the \texttt{Magma} file \texttt{master-21.m} for code verifying these claims.

\begin{remark}\label{rem:timing}
While direct analysis over $\mQ$ works in this example, it took over a week to complete.
In contrast, we quickly verify our results via a direct analysis over $\mF_5$, which took around 8 seconds.  
\end{remark}
\subsection{Direct analysis over $\mQ$ on a non-trigonal curve quotient --- the cases $X_1(N)$ for $N \in \{30, 33, 35, 39\}$. }
\label{sec:30}
The curve $X_1(30)$ has genus 9 and is 6-gonal.
While the genus is not prohibitively large, we instead work on the genus 3, hyperelliptic curve $X_0(30)$, which is not trigonal.
Using the \texttt{Magma} intrinsic \texttt{SmallModularCurve(30)}, we have the affine equation
\[
X_0(30)\colon y^2 + (-x^4 - x^3 - x^2)y = 3x^7 + 19x^6 +
 60x^5 + 110x^4 + 121x^3 + 79x^2 + 28x + 4.
\]
We note that the map $X_0(30)^{(3)}\to J_0(30)$ is not injective since $X_0(30)^{(2)}$ contains a rational curve (see Remark \ref{rem:cuspidalgonality}). However, the map is still injective on cubic points of $X_0(30)$, which suffices for our purposes.

In Example \ref{exam:torsionJ030}, we proved that $J_0(30)(\mQ)\cong \mZ/2 \mZ \times \mZ/4\mZ \times \mZ/24\mZ$, and so next we perform a direct analysis over $\mQ$ and find $48$ cubic points on $X_0(30)$, which are all necessarily non-cuspidal by Lemma \ref{lemma:cuspidal-subscheme}(2).
To conclude, we compute the $j$-invariants of the cubic points and check to see if there is a 30-torsion point on a twist of the corresponding curve $E$.
To check this, it suffices to show that for some prime $p$ not dividing $30N_E\Delta$, where $N_E$ is the conductor of the elliptic curve $E$ and $\Delta$ the discriminant of the cubic number field where $E$ is defined, and for all primes $\mathfrak{p}$ above $p$, $E$ modulo $\mathfrak{p}$ and its twists do not have an $\mF_{\mathfrak{p}}$-rational point of order $30$.
For each of the 48 cubic points, we verify this, which tells us that these cubic points on $X_0(30)$ do not lift to $X_1(30)$, and thus, there are no cubic points on $X_1(30)$. 
We use a similar combination of methods to handle $N = 33$, $35$ and $39$. \\

See the \texttt{Magma} files \texttt{master-30.m}, \texttt{master-33.m}, \texttt{master-35.m}, and \texttt{master-39.m} for code.

\subsection{Hecke bounds and direct analysis over $\mF_p$--- the cases $X_1(2,16)$, $X_1(24)$, and $X_1(2,18)$.}
\label{sec:216}
The modular curve $X_1(2,16)$ has genus 5 and is tetragonal.
Using the model from Derickx--Sutherland \cite{derickxS:quintic-sextic-torsion}, we find $8$ rational cusps and $2$ quadratic cusps on $X_1(2,16)$. The local bound on the torsion is $[2,2,20,20]$, and the Hecke bound improves this to $[2,20,20]$.
The cusps generate a subgroup isomorphic to $[2, 20, 20]$ and hence they generate all of the torsion on $J_1(2,16)(\Q)$.
We also know that these cusps give rise to $136$ rational points on $X_1(2,16)^{(3)}$.

To conclude, we compute the intersection of the image of an Abel--Jacobi with our known subgroup modulo 3.
This gives 136 cubic divisors, and so by Subsection \ref{subsec:local}, we have completely determined the rational points on $X_1(2,16)^{(3)}$.
A very similar argument handles $X_1(24)$ and $X_1(2,18)$.\\

See the \texttt{Magma} files \texttt{master-2-16.m}, \texttt{master-24.m} and \texttt{master-2-18.m}
and the \texttt{Sage} file \\ \texttt{torsionComputations.py} for code verifying these claims.

\subsection{Hecke bounds and direct analysis over $\mF_3$ --- the cases $X_1(28)$ and $X_1(26)$. }
\label{subsec:X128}
The modular curve $X_1(28)$ has genus 10 and is 6-gonal.
A direct analysis over $\mQ$ takes too much time, and we have no suitable quotient curve.

We know that $J_1(28)(\Q)$ is cuspidal, but we would like to find explicit generators for $J_1(28)(\Q)$.
Via the results on modular units in Subsection \ref{subsec:modularunits}, we determine that the cuspidal divisors on $J_1(28)(\Q)$ are generated by the principal divisors together with the cuspidal divisors of degree $1$, $2$, and $3$ (i.e., we do not need cuspidal divisors of degree 6 to generate the torsion). 
Following Subsection \ref{subsec:direct}, we perform direct analysis over $\mF_3$ and determine that there is only one cubic point locally in the image of Abel--Jacobi, namely the known cubic cusp (strictly speaking:~``{\em Galois orbit} of cubic cusps'').
Therefore, we can conclude that the only cubic point on $X_1(28)$ is the known cubic cusp.
A similar argument handles $X_1(26)$.
\\

See the \texttt{Magma} files \texttt{master-28.m} and \texttt{master-26.m}, and the \texttt{Sage} file \texttt{torsionComputations.py} for code verifying these claims.

\subsection{Hecke bounds and direct analysis over $\mQ$ on a non-trigonal curve quotient --- the case of $X_1(45)$}
\label{sec:45}
The curve $X_1(45)$ has genus 41, so we look for a quotient. It maps to $X_0(45)$, a non-hyperelliptic genus 3 curve, which is necessarily trigonal and thus has infinitely many cubic points.
We work instead with the intermediate genus 5 non-hyperelliptic non-trigonal curve $X_H(45)$, where $H$ is the subgroup
\begin{align*}
H&= \left\{ \begin{pmatrix}a & b \\ 0 & c \end{pmatrix} \in \Gamma_0(45): a \text{ is a square modulo } 15 \right\}.
\end{align*}
We use the algorithm from \cite{DvHZ} to compute an equation $P(x,y)=0$ for $X_H(45)$ as follows. (See also \cite[Example 1]{derickx2014gonality}.) We construct modular units $x,y \in \Q(X_1(45))$ that are invariant under the diamond action $\langle 4 \rangle$, and compute a relation $P(x,y)=0$; we then check its genus to verify that $x,y$ generate $\Q(X_H(45))$.
We chose $y$ to be the image of $x$ under the diamond action $\langle 2 \rangle$ so that $P$ is symmetric, allowing it to be written as $P(x,y) = Q(xy, x+y) = 0$ for some $Q$.
Here
\[ Q(u, v) = u^3+(v^2+7v+7)u^2+(2v+3)(v^2+5v+3)u+(v^2+3v)^2 \]
is an equation for $X_0(45)$. 
As a check for correctness, we computed an alternative model of $X_H(45)$ using \cite{zywina:ComputingActionsCuspForms}, and checked in \texttt{Magma} that they are isomorphic.

Next, we determine the cubic points on $X_H(45)$. 
Since $X_1(45)$ dominates $X_H(45)$, $J_H(45)(\Q)$ has rank 0, and local computations tell us that that $J_H(45)(\Q)$ is isomorphic to a subgroup of
\[
 \Z/2\Z \times \Z/4\Z \times \Z/24\Z \times \Z/48\Z.
\]
The Galois orbits of cusps generate a subgroup isomorphic to $\Z/2\Z \times \Z/4\Z \times \Z/48\Z$, and the Hecke bounds from Subsection \ref{ssec:better-local-torsion-bounds} prove that this generates $J_H(45)(\Q)$.

We fix a known rational point $\infty \in X_H(45)(\Q)$. 
Via direct analysis over $\mQ$, we determine that $f_{3,3\infty}(X_H(45)(\Q)) = f_{3,3\infty}(X_H(45)(\Q)) \cap J_H(45)(\Q)$ and find that there are 8 non-cuspidal cubic points on $X_H(45)$. 
Finally, we lift the cubic points back to $X_1(45)$ to verify that they do not come from degree 3 points on $X_1(45)$. \\

See the \texttt{Maple} files \texttt{FindModel-XH-45-input} and \texttt{Lift-XY-back-to-X1-45-input},
the \texttt{Magma} file \texttt{master-45.m} and the \texttt{Sage} file \texttt{torsionComputations.py} for code verifying these computations.

\section{Cubic points on modular curves with positive rank Jacobians}
\label{sec:cubicpointspos}
In this section, we use the formal immersion criterion of Subsection \ref{ss:form-immers-crit} to determine the cubic points on modular curves $X_1(N)$ for $N = 65, 121$.

\subsection{The case of $X_1(121)$}\label{subsec:analysis121}
We will prove that the cubic points on $X_1(121)$ are cuspidal.
Consider the morphism $\phi\colon X_1(121)\to X_0(121)$, and let $\phi^{(3)}\colon X_1(121)^{(3)} \to X_0(121)^{(3)}$.
The algorithm for $X_0(N)$ from Ozman--Siksek \cite{ozman:quadraticpoints} provides the canonical model of $X_0(121)$. 
This genus 6 curve is not trigonal by \cite[Theorem 3.3]{hasegawa1999trigonal}. 
By Lemma \ref{lemma:cuspidal-subscheme}(2), the cuspidal subscheme of $X_0(121)$ is isomorphic to the disjoint union of $\mu_1$, $\mu_1$, and $(\mu_{11})'$, where the prime notation refers to points of exact order 11, i.e., $X_0(121)$ has 12 cusps:~2 rational cusps $\{c_0,c_{\infty}\}$ and a Galois orbit of size 10 defined over $\mQ(\zeta_{11})$.

To begin, we claim that for a cubic point $x$ on $X_1(121)$, $\phi^{(3)}(x_{\mF_5})$ is equal (as a divisor) to one of
\[
3[c_{\infty,\mF_5}], 2[c_{\infty,\mF_5}] + [c_{0,\mF_5}], [c_{\infty,\mF_5}] + 2[c_{0,\mF_5}], \text{ or } 3[c_{0,\mF_5}].
\]
Indeed, by a brute force search, we see that there are no elliptic curves over $\mF_{5^i}$ for $i=1,2,3$ with a rational 121 torsion point, and so a cubic point on $X_1(121)$ has bad reduction at each prime above~5, i.e., it must reduce to a sum of cusps (considered as a degree 3 divisor). (The Hasse bound at the prime $5$ unfortunately does not a priori exclude the existence of such an elliptic curve.) 
The image $\phi^{(3)}(x_{\mF_5})$ is thus also a sum of cusps. The cuspidal subscheme of $X_0(121)$ further decomposes modulo 5: the prime 5 splits in $\mQ(\zeta_{11})$ as 2 primes each with inertia degree 5, and so the component $(\mu_{11})'_{\mF_5}$ splits as two copies of $\Spec \mF_{5^5}$, i.e., the modulo 5 reduction of the Galois orbit of size 10 is defined over $\mF_{5^5}$.
Now, our claim follows since $c_{0,\mF_5}$ and $c_{\infty,\mF_5}$ are the only cusps defined over $\mF_{5^2}$ or $\mF_{5^3}$.

Using \texttt{Magma}'s intrinsic \texttt{Decomposition(JZero(121))}, we find that \[J_0(121)\sim_{\mQ} E_1 \times E_2 \times E_3 \times E_4 \times X_{0}(11) \times X_1(11),\]
and we compute that $E_1(\mQ)$ has rank 1
and $X_0(11)(\Q), X_1(11)(\Q), $ and $E_i(\mQ)$ have rank 0 for $i = 2,3,4$.
Let $A:= E_2 \times E_3 \times E_4 \times X_1(11)$.
Let $C_1 := 3[c_{\infty}], C_2 := 2[c_{\infty}] + [c_{0}], C_3 := [c_{\infty}] + 2[c_{0}],$ and $ C_4 := 3[c_{0}].$
Define for $i = 1,\dots ,4$, the morphisms $\mu_i\colon X_0(121)^{(3)}\to J_0(121)$ given by $z\mapsto [z-C_i]$ and $t\colon J_0(121) \to A$, where the latter is projection.
By our first claim, we know that $\phi^{(3)}(x_{\mF_5})$ is equal to one of $C_{1,\mF_5},C_{2,\mF_5},C_{3,\mF_5}$ or $C_{4,\mF_5}$, and so for the appropriate $i$, the image of $(t\circ \mu_i)(\phi^{(3)}(x))$ belongs to the kernel of reduction $A(\mQ)\to A(\mF_5)$. However, since $A(\mQ)$ is torsion, the kernel of reduction is trivial \cite[Appendix]{katz:galois-properties-of-torsion}, and so for each appropriate $i$, $(t\circ \mu_i)(\phi^{(3)}(x)) = 0$.

To conclude our analysis, we verify that the morphism
\[
\tau\circ\mu_i\colon X_0(121)^{(3)} \to A
\]
is a formal immersion at the point $C_{i,\mF_5}$ using Proposition \ref{prop:generalKamienny}.
Via \texttt{Magma}'s intrinsic \texttt{Newform}, we can compute a basis for the 1-forms on $A$, and so to verify the formal immersion criterion, we need to check that certain $4 \times 3$ matrices (see \ref{eq:formal-immersion-matrix}) have rank $3$.
We can compute the $q$-expansion at $c_{\infty}$ in \texttt{Magma}, and since the Atkin--Lehner involution $\omega_{121}$ swaps $c_0$ and $c_{\infty}$, we can also directly compute the $q$-expansion at $c_{0}$ in \texttt{Magma}. We then observe the four matrices (modulo 5) defined in Proposition \ref{prop:generalKamienny} all have rank 3, and thus, $(t\circ \mu_i)$ is a formal immersion at the above points.
Finally, by Lemma \ref{lem:formalimmersion}, $\phi^{(3)}(x)$ is equal to one of $3[c_{\infty}], 2[c_{\infty}] + [c_{0}], [c_{\infty}] + 2[c_{0}],$ or $ 3[c_{0}]$, and therefore we can conclude that any cubic point on $X_1(121)$ must be cuspidal. \\

See the \texttt{Magma} file \texttt{master-121.m} for code verifying these claims.

\begin{remark}
 The argument that $\phi^{(3)}(x_{\mF_5})$ is a sum of reductions of rational cusps proceeded by brute force. Typically, one would work at a smaller prime (in this case, 3) and apply the Hasse bound as in Lemma \ref{lem:whichcusps} below. It turns out that $X_0(121)^{(3)} \to A$ is not a formal immersion modulo 3, forcing us to work at a larger prime.
\end{remark}

\subsection{The case of $X_1(65)$}
\label{subsec:analysis65}
We will prove that the cubic points on $X_1(65)$ must be cuspidal.
Consider the morphism $\phi\colon X_1(65)\to X_0(65)$, and let $\phi^{(3)}\colon X_1(65)^{(3)} \to X_0(65)^{(3)}$.
Using equations from Ozman--Siksek \cite{ozman:quadraticpoints}, we have a model for the genus 5 curve $X_0(65)$, which is not trigonal \cite[Theorem 3.3]{hasegawa1999trigonal}.
By Lemma \ref{lemma:cuspidal-subscheme}(2), we have that $X_0(65)$ has 4 cusps, $c_0$, $c_{\infty}$, $c_{1/5},$ and $c_{1/13}$, all of which are rational.

Unlike the case of $N=121$, the map $X_0(65)^{(3)} \to J_e(65)$ to the winding quotient is \emph{not} a formal immersion at all cuspidal divisors. Fortunately, the following lemma tells us that we only need to verify the formal immersion criterion at particular sums of cusps on $X_0(65)$.

\begin{lemma}\label{lem:whichcusps}
Let $\phi^{(3)}\colon X_1(65)^{(3)}\to X_0(65)^{(3)}$, and let $x$ be a cubic point on $X_1(65)$.
Then, $\phi^{(3)}(x_{\mF_{3}})$ is equal to $3[c_{\F_3}]$ for some rational cusp $c \in X_0(65)(\Q)$.
\end{lemma}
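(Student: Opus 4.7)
The plan is to first show via the Hasse bound that $x_{\mF_3}$ is supported on cusps, and then to classify all possible $\mF_3$-rational degree-$3$ effective cuspidal divisors by reducing the explicit cuspidal subscheme of Lemma~\ref{lemma:cuspidal-subscheme}(1) modulo~$3$ and pushing forward along $\phi$.

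For cuspidality, note that for $i \le 3$ and any elliptic curve $E/\mF_{3^i}$, the Hasse bound gives $\#E(\mF_{3^i}) \le 3^i + 1 + 2\sqrt{3^i} \le 38 < 65$, so no elliptic curve over $\mF_{3^i}$ carries a point of order~$65$. Since $X_1(65)$ has good reduction at $3$, if the reduction of $x$ modulo some prime above~$3$ of its cubic field were non-cuspidal, we would obtain such an elliptic curve, a contradiction. Hence $x_{\mF_3}$ is supported on the cuspidal subscheme of $X_1(65)_{\mF_3}$.

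Next, by Lemma~\ref{lemma:cuspidal-subscheme}(1) (applied over $\mZ[1/130]$) together with the compatibility of cuspidal decompositions under $\phi$, the cuspidal subscheme of $X_1(65)$ decomposes into four sheets $(\mu_{65/d}\times\mZ/d\mZ)'/[-1]$ indexed by $d\in\{1,5,13,65\}$, and each sheet maps down to the corresponding rational cusp $c_d$ of $X_0(65)$ (all four cusps of $X_0(65)$ are rational since $\gcd(d,65/d)=1$). A direct Frobenius-orbit computation modulo~$3$, using $\ord_{65}(3)=12$ (together with $-1\notin\langle 3\rangle\bmod 65$ to rule out further identification on the $d=1$ sheet), $\ord_{13}(3)=3$, $\ord_{5}(3)=4$, and the fact that every $\mZ/d\mZ$ appearing has odd order (so the $[-1]$-quotient imposes no additional collapse on primitive elements), yields that the closed cusps of $X_1(65)_{\mF_3}$ have degrees $12$ (two cusps, from $d=1$), $4$ (six cusps, from $d=13$), $3$ (eight cusps, from $d=5$), and $1$ (twenty-four cusps, from $d=65$).

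Because there are no cusps of degree~$2$ and degrees $4,12$ are too large, the only possible shapes for the effective cuspidal divisor $x_{\mF_3}$ of degree~$3$ are either a sum of three rational cusps (necessarily all in the $d=65$ sheet, so mapping to $3[c_{d=65}]$ under $\phi^{(3)}$) or a single closed cusp of degree~$3$ (necessarily in the $d=5$ sheet, whose three geometric points all map to $c_{d=5}$, giving $3[c_{d=5}]$). In either case, $\phi^{(3)}(x_{\mF_3})=3[c_{\mF_3}]$ for some rational cusp $c\in X_0(65)(\mQ)$. The main obstacle is the bookkeeping in the Frobenius-orbit analysis, where one must track the combined action of Frobenius and the $[-1]$-involution on $(\mu_{65/d}\times\mZ/d\mZ)'$ modulo~$3$; the favorable arithmetic of $3$ modulo~$65$ forces the cuspidal degrees to land in $\{1,3,4,12\}$, which is just sparse enough to exclude any mixed-sheet configuration of total degree~$3$ and thereby yield the single-cusp conclusion.
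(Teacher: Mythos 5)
Your proposal is correct and follows essentially the same route as the paper's proof: the Hasse bound over $\mF_{3^i}$, $i\le 3$, forces cuspidal reduction, and the sheet-by-sheet Frobenius-orbit analysis of the cuspidal subscheme from Lemma~\ref{lemma:cuspidal-subscheme}(1) (yielding closed cusps of degrees $1,3,4,12$, with each sheet lying over a single rational cusp of $X_0(65)$) rules out every configuration except three rational cusps or one cubic cusp, exactly as in Subsection~\ref{subsec:analysis65}. Your explicit degree bookkeeping (including the observation $-1\notin\langle 3\rangle \bmod 65$) is a slightly more systematic presentation of the same computation.
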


\begin{proof}
First, we claim that $x_{\mF_3}$ must be supported on a sum of cusps using the following Hasse bound computation.

If $E$ is an elliptic curve over a cubic field $K$ and $\pp$ is a prime of $K$ over a rational prime $p$, then, if its reduction modulo $\pp$ is smooth, it has (by the Hasse bound) at most
\[
\#E({\mathbb {F}}_{q}) \leq 2{\sqrt {q}} + (q+1)
\]
points, where $q \leq p^3$.
In our setting of $N = 65$, the Hasse bound tells us that there cannot exist an elliptic curve over $\mF_{3^i}$ for $i = 1,2,3$ with a rational $65$-torsion point, and thus $x_{\mF_3}$ must be a degree 3 cuspidal divisor on $X_1(65)_{\mF_3}$.

By Lemma \ref{lemma:cuspidal-subscheme}(1), the cuspidal subscheme of $X_1(65)$ is isomorphic to
\[
(\mZ/65\mZ)'/[-1] \,\sqcup\, (\mZ/5\mZ \times \mu_{13})'/[-1] \,\sqcup\, (\mu_5 \times \mZ/13\mZ)'/[-1] \,\sqcup\, (\mu_{65})'/[-1],
\]
where the prime notation means points of exact order $65$, and each piece reduces to a distinct rational cusp on $X_0(65)$. We need to analyze the reduction modulo 3 of each piece.

First, no cubic point can reduce to either of the last two pieces. The scheme $(\mu_5 \times \mZ/13\mZ)'$ is isomorphic to 12 copies of $\left(\mu_{5}\right)'$; the $[-1]$ action identifies pairs of copies of $(\mu_5)'$, thus the $(\mu_5 \times \mZ/13\mZ)'/[-1]$ piece is isomorphic to 6 copies of $\mu_5'$. 
Since the 5-th cyclotomic polynomial is irreducible modulo 3, $(\mu_5)'_{\mF_3}$ is still irreducible; $(\mu_5 \times \mZ/13\mZ)'/[-1]$ is thus a sum of quartic points, and our cubic point $x$ cannot reduce to this component.
A similar argument shows that our cubic point $x$ cannot reduce to $((\mu_{65})'/[-1])_{\mF_3}$.

Next, the $(\mZ/5\mZ \times \mu_{13})'/[-1]$ part breaks up as 4 copies of $\mu_{13}'$, and the $[-1]$ action identifies pairs of copies of $(\mu_{13})'$.
The image of $(\mZ/5\mZ \times \mu_{13})'/[-1]$ in $X_0(65)$ is supported at a single rational cusp.
Since each component is isomorphic to $\mu_{13}'$ and the 13-th cyclotomic polynomial factors into four cubics over $\mF_3$, $(\mu_{13})'_{\mF_3}$ breaks up into four pieces each defined over $\mF_{3^3}$. 
The $(\mZ/65\mZ)'/[-1] $ part corresponds to the $\phi(65)/2 = 24$ rational cusps on $X_1(65)$, and these cusps have the same image in $X_0(65)$. A cubic point (considered as a degree 3 divisor) thus reduces either to a sum of three $\F_3$-rational cusps, which have the same image in $X_0(65)$, or a single cubic cusp (again, considered as a degree 3 divisor), whose image in $X_0(65)^{(3)}$ is $3[c_{\F_3}]$ for a rational cusp $c$.
\end{proof}

We now analyze the decomposition of $J_0(65)$.
Using \texttt{Magma}'s intrinsic \texttt{Decomposition(JZero(65))}, we find that \[J\sim_{\mQ} E \times A_1 \times A_2,\]
where $E(\mQ)$ has rank 1 and $A_i$ are modular abelian surfaces with analytic rank 0.
Let $A$ be the winding quotient.
Let $C_1 :=  3[c_{\infty}]$, $C_2 := 3[c_{0}]$, $C_3 :=  3[c_{1/5}]$, and $C_4 :=  3[c_{1/13}]$. 
For $i = 1,\dots , 4$, define the morphisms $\mu_i\colon X_0(65)^{(3)}\to J_0(65)$ given by $z\mapsto [z-C_i]$ and $t\colon J_0(65) \to A$, where the latter is projection.
Since $\phi^{(3)}(x_{\mF_3})$ is equal to $C_{i,\mF_3}$ for some $i$, the point $(t\circ \mu_i)(\phi^{(3)}(x)) \in A(\mQ)$ belongs to the kernel of reduction $A(\mQ)\to A(\mF_3)$ for the appropriate $i$. 
However, since $A(\mQ)$ is torsion, the kernel of reduction is trivial \cite[Appendix]{katz:galois-properties-of-torsion}, so $(t\circ \mu_i)(\phi^{(3)}(x)) = 0$ for the appropriate $i$.

To conclude, we verify that the morphism
\[
\tau\circ \mu_i\colon X_0(65)^{(3)} \to A
\]
a formal immersion at $C_{i,\mF_3}$ using Proposition \ref{prop:generalKamienny}.
Via \texttt{Magma}'s intrinsic \texttt{Newform}, we can compute a basis $\{ \omega_1,\dots , \omega_5\}$ for the 1-forms on $J_0(65)$. The initial basis has $q$-expansions with non-integer coefficients; to find a basis with integer $q$-expansions, we simultaneously diagonalize these $1$-forms with respect to the action of the Hecke operators. The Hecke action also identifies the subspace pulled back from $A$. To verify that the formal immersion criterion holds at $C_{i,\mF_3}$, we check that certain $4 \times 3$ matrices (see \ref{eq:formal-immersion-matrix}) have rank $3$; we can compute the expansions of the $\omega$ at the other cusps via the Atkin--Lehner involutions. 
Finally, Lemma \ref{lem:formalimmersion} asserts that $\phi^{(3)}(x)$ is equal to either $3[c_{0}],\,3[c_{\infty}],\, 3[c_{1/5}], $ or $3[c_{1/13}]$; we conclude that $x$ is cuspidal.\\

See the \texttt{Magma} file \texttt{master-65.m} for code verifying these claims.

\begin{remark}\label{rem:wang2}
  Wang addresses the cases of $N_1 = 22,25,40,49$ in \cite[Theorem 1.2]{wang:cyclictorsion2}, $N_1 = 55,65$ in \cite[Theorem 1.2]{wang:cyclictorsion}, and $N_1 = 39$ in a recent preprint \cite[Theorem 0.3]{wang:cyclictorsion3}. However, his proofs of these cases are incorrect. For instance, \cite[Lemma 3.5]{wang:cyclictorsion2} claims that for $N > 4$ and a prime $p\nmid N$, when the gonality of $X_1(N) > d$ and $J_1(N)(\Q)$ is finite, the moduli of each non-cuspidal, degree $d$ point of $X_1(N)$ has good reduction at \textit{every} prime $\mathfrak{p}$ over $p$. (The proofs for $N_1 = 22,25,40,49$ and $39$ rely on this claim.)

We provide a counter-example to this.
By \cite[Table 1]{derickx2014gonality}, the gonality of $X_1(31)$ is $12$.
Over the number field $\Q[a]/(a^{11} - 4a^{10} + 9a^9 - 15a^8 + 21a^7 - 21a^6 + 17a^5 - 8a^4 + 3a^2 - 3a + 1)$, the elliptic curve 
\begin{align*}
&y^2+ (a^{10}-2a^9+3a^8-3a^7+5a^6+a^5+a^4+3a^3-2a^2+a-1)xy \\
& \mbox{} \hspace{10.55pt} + (a^{10}+3a^9+4a^8+6a^7+6a^6+a^5+6a^4-6a^3+2a^2-3a-1)(y-x^2) - x^3 = 0
\end{align*}
has a point of order 31, namely $(0,0)$.
However the norm of the $j$-invariant of this elliptic curve has 311 in its denominator, and hence, this curve has multiplicative reduction for at least one prime above 311; it cannot have additive reduction because it has a point of order 31. Indeed the prime $311$ splits into three primes in this degree 11 number field, one of degree 9 and two of degree 1, and this elliptic curve has good reduction at the prime of degree 9 and one prime of degree 1, but multiplicative reduction at the other prime of degree 1.

Similarly, for $N_1 = 55,65$, while \cite[Lemma 3.6]{wang:cyclictorsion} is correct, its application to \cite[Theorem 3.7]{wang:cyclictorsion} contains an error. Wang applies \textit{loc.~cit.~}Lemma 3.6 to conclude that the points $(\omega_n(x_1),\dots,\omega_n(x_d))$ and $(\infty,\dots,\infty)$ on the $d$-th symmetric power $X_0(N_1)^{(d)}$ of the modular curve $X_0(N_1)$ reduce to the same point modulo $p$, where $\omega_n$ is some Atkin--Lehner involution on $X_0(N_1)$.
%
The correct condition that one needs to check is that $\gcd(N_1,3^{2i} - 1) = 1$ for $i = 1,2$, which succeeds for $N_1 = 143, 91, 77$, however it fails for $N_1 = 55,65$. (In Lemma \ref{lem:whichcusps}, we circumvent this issue for $N_ 1 = 65$ by verifying the formal immersion criteria at more general cuspidal divisors.)

\end{remark}

\begin{remark}
For each $i$, the morphism $(t\circ \mu_i)$ in Section \ref{subsec:analysis65} is not a formal immersion at \textit{every} cuspidal divisor on $X_0(65)^{(3)}(\mF_3)$.
The failure of the formal immersion criterion is due to the fact that the Atkin--Lehner quotient $X_{0}^+(65)$ is a rank 1 elliptic curve and the quotient map $X_0(65) \to X_0^+(65)$ has degree 2.
Since $X_0(65)$ has rational points (the 4 rational cusps), there are 4 ``copies'' of $X_0(65)^{(2)}(\Q)$ lying on $X_0(65)^{(3)} (\Q)$.
In particular, $X_0(65)^{(3)} (\Q)$ is infinite, but only finitely many rational points of $X_0(65)^{(3)}$ do not lie on one of the copies of $X_0(65)^{(2)}(\Q)$.
Moreover, the reduction modulo 3 of these copies of $X_0(65)^{(2)}(\Q)$ correspond to some of the points on $X_0(65)^{(3)}(\mF_3)$ where the formal immersion criterion fails. In fact, even more is true: one can compute (in \texttt{Magma}, via cotangent spaces) that the map $X_0(65)^{(3)} \to J_e(65)$ to the winding quotient (which has dimension 4) is not an immersion at all points of $X_0(65)^{(3)}$.
\end{remark}

\section*{Acknowledgements}
We thank Lea Beneish, Nils Bruin, John Duncan, Bjorn Poonen, Jeremy Rouse, Andrew Sutherland, and Bianca Viray for helpful discussions.
We also thank Lea Beneish, Abbey Bourdon, Bas Edixhoven, \'Alvaro Lozano-Robledo, and Filip Najman for useful comments on an earlier draft, and we are thankful to Koji Matsuda, Maleeha Khawaja and Samir Siksek to pointing out an error in Theorem \ref{Prop-J-ranks}. 
The third author was supported by NSF grant 1618657.
The last author was partially supported by NSF grant DMS-1555048.
Finally, we thank the anonymous referee for their comments. 

 \bibliography{master}
\bibliographystyle{plain}

\end{document}